\pgfplotsset{compat=newest}
\newtheorem{theorem}{Theorem}
\newtheorem{lemma}[theorem]{Lemma}
\newtheorem{remark}[theorem]{Remark}
\newtheorem{corollary}[theorem]{Corollary}
\newcommand{\norm}[1]{\Vert #1 \Vert }
\def\tsc#1{\csdef{#1}{\textsc{\lowercase{#1}}\xspace}}
\begin{document}
\let\WriteBookmarks\relax
\def\floatpagepagefraction{1}
\def\textpagefraction{.001}

% Short title
\shorttitle{}    

% Short author
\shortauthors{L.~Banz, M.~Sch\"{o}nauer and A.~Schr\"{o}der }  

% Main title of the paper
\title [mode = title]{Error estimates for perturbed variational inequalities of the first kind}

% First author
%
% Options: Use if required
% eg: \author[1,3]{Author Name}[type=editor,
%       style=chinese,
%       auid=000,
%       bioid=1,
%       prefix=Sir,
%       orcid=0000-0000-0000-0000,
%       facebook=<facebook id>,
%       twitter=<twitter id>,
%       linkedin=<linkedin id>,
%       gplus=<gplus id>]

\author[1]{Lothar~Banz}[orcid=0000-0001-9638-9206]
\ead{lothar.banz@plus.ac.at}
\credit{Conceptualization, Methodology, Writing - Original Draft, Writing - Review \& Editing, Software, Validation, Visualization}

\author[1]{Miriam~Sch\"{o}nauer}[orcid=0009-0002-6973-7038]
\ead{miriam.schoenauer@plus.ac.at}
% Corresponding author indication
\cormark[1]
\credit{Conceptualization, Methodology, Writing - Review \& Editing}

\author[1]{Andreas~Schr\"{o}der}[orcid=0000-0003-3691-0906]
\ead{andreas.schroeder@plus.ac.at}
\credit{Conceptualization, Methodology, Writing - Original Draft, Writing - Review \& Editing}

% Address/affiliation
\affiliation[1]{organization={Department of Mathematics, University of Salzburg},
            addressline={ Hellbrunner Stra{\ss}e 34}, 
            city={Salzburg},
          citysep={}, % Uncomment if no comma needed between city and postcode
            postcode={5020}, 
            state={Salzburg},
            country={Austria}}

% Corresponding author text
\cortext[1]{Corresponding author}

% For a title note without a number/mark
%\nonumnote{}

% Here goes the abstract
\begin{abstract}
    In this paper, we derive a priori error estimates for variational inequalities of the first kind in an abstract framework. This is done by combining the first Strang Lemma and the Falk Theorem. The main application consists in the derivation of a priori error estimates for Galerkin methods, in which "variational crimes" may perturb the underlying variational inequality. Different types of perturbations are incorporated into the abstract framework and discussed by various examples. For instance, the perturbation caused by an inexact quadrature is examined in detail for the Laplacian obstacle problem. For this problem, guaranteed rates for the approximation error resulting from the use of higher-order finite elements are derived. In numerical experiments, the influence of the number of quadrature points on the approximation error and on the quadrature-related error itself is studied for several discretization methods.
\end{abstract}

% Use if graphical abstract is present
%\begin{graphicalabstract}
%\includegraphics{}
%\end{graphicalabstract}

% Research highlights
%\begin{highlights}
%\item 
%\item 
%\item 
%\end{highlights}

% Keywords
% Each keyword is seperated by \sep
\begin{keywords}
    variational inequality of the first kind \sep a priori error estimates \sep higher-order finite elements \sep quadrature error \sep variational crimes

    \textit{MSC}
    
    65K15 \sep 65N15 \sep 35J86
\end{keywords}

\maketitle

% Main text
\section{Introduction}
    Variational inequalities of the first kind are used in the modeling of a variety of problems spanning across multiple fields of research. For instance, obstacle problems, which may be described by such variational inequalities, are of interest in engineering as they can be used to model lubrication, see the monographs \citep{Kinderlehrer2000, RodriguesObstacle}. Additionally, the simulation of fluid displacement in porous media, see \citep{CHANG2017}, is done using variational inequalities. Of course, the use of variational inequalities is not limited to the field of engineering. In logistics, supply chains may also be modeled by variational inequalities, see \citep{TOYASAKI2014,YU2013}, further emphasizing their real-world applicability.
    
    These problems are discretized to compute approximate solutions, usually through the application of finite element methods. Discretizing such variational inequalities, however, often involve "variational crimes" which may introduce perturbations, i.e.~the underlying operator and data are replaced by approximate counterparts. This results in the computation of approximate solutions to the perturbed problem. 
    We will differentiate between two cases: Either the perturbation is introduced
    after the process of discretization (i.e.~formulating the discrete problem is possible but computing the solution is not and thus operator and data are approximated to create a solvable problem) or perturbation is introduced     during the process of discretization (i.e.~operator and data must be approximated in order to produce the formulation of the problem in a finite dimensional space).
    
    One example of instances where perturbation is introduced after the process of discretization is the intentional approximation of the elliptic operator to improve the computational speed. 
    Such techniques are commonly employed in boundary element methods; see, e.g.~\citep{HackbuschBebendorf2003, Hackbusch1999,HackbuschKhoromskij2000Complexity,HackbuschKhoromskij2000, steinbach2008numerical}. The use of an inexact quadrature to compute operator and data also introduce perturbations after the discretization, which is studied in, e.g.~\citep{Ciarlet2002,Rannacher2} in the context of variational equalities. Variational inequalities, constrained by variational equalities, are examples where the perturbation is introduced during discretization. We refer to \citep{BANZ2024219,Banz2022OptimalControl,gwinner1993boundary,Gwinner2018,hinze2008optimization, MAISCHAK2005} for such constrained inequalities.
    
    The aim of this paper is to provide a framework for deriving a priori error estimates which are applicable to a variety of perturbed variational inequality problems. For this, we prove an a priori error estimate (Theorem~\ref{thm:abstractStrangFalk}) through the combination of the first Strang Lemma and the Falk Theorem in an abstract setting. To illustrate the applicability of our result, we discuss examples of the two forms of perturbations as described above.
    
    We give special consideration to the perturbation arising after the discretization process, especially concerning the use of an inexact quadrature. In the framework of the Laplacian obstacle problem, we introduce a higher-order $h$-finite element discretization, analyze the quadrature error, and derive guaranteed convergence rates for a higher-order $h$-finite element discretization setup. To that end, we generalize the ideas used in \citep{Rannacher2} to the variational inequality case.
    
    In the context of perturbations introduced during the process of discretization, we consider variational inequalities constrained by variational equalities. For this purpose, we examine exemplarily the Biot contact problem \citep{BANZ2024219}, BEM for Signorini contact problem \citep{Gwinner2018}, as well as an optimal control problem \citep{Banz2022OptimalControl}.
    
    Finally, in numerical experiments, we illustrate the influence of quadrature rules on the convergence rates of different discretization methods for the obstacle problem. Furthermore, we conduct experiments on the behavior of the quadrature related error itself.
    
    The rest of the article is organized as follows. In Section~\ref{abstractError} we introduce the abstract framework of the variational inequality of the first kind and derive a priori error estimates. In Section~\ref{sec:pert_by_approx} we examine perturbations introduced after discretization, before we study the Laplacian obstacle problem in a discretization setup of higher-order $h$-finite elements and prove a priori convergence rates for the quadrature related error in Section~\ref{sec:perturbed_by_quad}. In Section~\ref{sec:perturbed_by vareq} we review examples of perturbations caused during the process of discretization and discuss several examples.  Finally, in Section~\ref{sec:numer_exp} the numerical experiments are discussed.
    
    Throughout this paper, the term $A\lesssim B$ shall be an abbreviation of $A\leq C\, B$ where $C>0$ is a generic constant. We also use standard notation of Lebesgue and Sobolev spaces with $H^s(\Omega) = W^{s,2}(\Omega)$.

\section{Abstract error estimates}\label{abstractError}

    \label{sec:Appendix}

    Let $V$ be a real Hilbert space and $V^*$ its dual space. Furthermore, let $\widetilde{V} \subset V$ be a second Hilbert space with dual space $\widetilde{V}^*\supset V^*$ where $\|\tilde{v}\|_{\widetilde{V}} = \| \tilde{v} \|_V$ for any $\tilde{v} \in \widetilde{V}$. Let $K$ and $\widetilde{K}$ be two closed, non-empty and convex subsets of $V$ and $\widetilde{V}$, respectively. Furthermore, let $\ell \in V^*$ and $\tilde{\ell}\in \widetilde{V}^*$ be given. Additionally, let $\langle \cdot, \cdot \rangle$ denote the duality pairing between any space and its dual space. We also assume $A:V \rightarrow V^*$ as well  as $\widetilde{A}:\widetilde{V}\rightarrow \widetilde{V}^*$ to be linear, continuous and elliptic mappings, i.e.
    \begin{align*}
        \|Av\|_{V^*} \leq c \|v\|_V, \qquad \langle A v,v\rangle \geq \alpha \|v\|_V^2 \\
        \|\widetilde{A}\tilde{v}\|_{\widetilde{V}^*} \leq \tilde{c} \|\tilde{v}\|_{\widetilde{V}}, \qquad\langle \widetilde{A} \tilde{v},\tilde{v}\rangle \geq \tilde{\alpha} \|\tilde{v}\|_{\widetilde{V}}^2
    \end{align*}
    %
    %\|Av\|_{V^*} \leq c \|v\|_V,\qquad \|\widetilde{A}\tilde{v}\|_{\widetilde{V}^*} \leq \tilde{c} \|\tilde{v}\|_{\widetilde{V}},\qquad \langle A v,v\rangle\geq \alpha \|v\|_V^2,\qquad \langle \widetilde{A} \tilde{v},\tilde{v}\rangle\geq \tilde{\alpha} \|\tilde{v}\|_{\widetilde{V}}^2
    for all $v\in V$, $\tilde{v} \in \widetilde{V}$, and some constants $c$, $\tilde{c}$, $\alpha$, $\tilde{\alpha}>0$.
    
    We consider the variational inequality problem of finding a $u\in K$ such that
    \begin{align} \label{eq:Varineq}
        \langle A u-\ell ,v-u \rangle \geq 0 \quad  
    \end{align}
    for all $v\in K$.
    
    A perturbation of \eqref{eq:Varineq} is given by the variational inequality: Find a $\tilde{u}\in \widetilde{K}$ such that
    \begin{align} \label{eq:PerturbedVarIneq}  
        \langle \widetilde{A} \tilde{u}-\tilde{\ell},\tilde{v}-\tilde{u} \rangle \geq 0 \quad   
    \end{align}
    for all $\tilde{v} \in \widetilde{K}.$
    
    Given this setup, the two problems \eqref{eq:Varineq} and \eqref{eq:PerturbedVarIneq} are well-defined. Indeed, the variational inequalities \eqref{eq:Varineq} and \eqref{eq:PerturbedVarIneq} have unique solutions, which Lipschitz-continuously depend on the data $\ell$ and $\tilde{\ell}$, respectively, see \cite[Theorem~2.1]{Kinderlehrer2000}. For the derivation of an a priori error estimate, we combine the first Strang Lemma (e.g.~\citep{HackbuschStrang}) with the Falk Theorem (e.g.~\citep{falk1974error}).
    \begin{theorem}\label{thm:abstractStrangFalk}
        There holds
        \begin{align*}
            \| u - \tilde{u} \|_V^2 \leq \left(2+ \frac{4c^2}{\tilde{\alpha}^2} \right)   \| u - \tilde{v} \|_V^2 + \frac{4}{\tilde{\alpha}}\,\langle Au- \ell  , \tilde{v}-u +v -\tilde{u}  \rangle +  \frac{4}{\tilde{\alpha}^2}  \|(A- \widetilde{A}) \tilde{v}-(\ell-\tilde{\ell})\|_{\widetilde{V}^*}^2  
        \end{align*}
        for all $v\in K$ and all $\tilde{v}\in \widetilde{K}$.
    \end{theorem}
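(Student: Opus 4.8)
The target inequality couples two ideas: (i) the Falk-type estimate, which controls $\|u-\tilde u\|_V$ in terms of the residual $\langle Au-\ell,\cdot\rangle$ tested against admissible elements, and (ii) the Strang-type perturbation estimate, which accounts for replacing $(A,\ell)$ by $(\widetilde A,\tilde\ell)$. The plan is to start from the ellipticity of $\widetilde A$ on $\widetilde V$, writing
\begin{align*}
\tilde\alpha\,\|u-\tilde u\|_V^2 &\le \langle \widetilde A(u-\tilde u), u-\tilde u\rangle,
\end{align*}
and then insert an intermediate element $\tilde v\in\widetilde K$ by splitting $u-\tilde u=(u-\tilde v)+(\tilde v-\tilde u)$. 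The term $\langle \widetilde A(u-\tilde u),\tilde v-\tilde u\rangle$ is handled by the perturbed variational inequality \eqref{eq:PerturbedVarIneq}, which gives $\langle \widetilde A\tilde u-\tilde\ell,\tilde v-\tilde u\rangle\ge 0$, i.e. $\langle \widetilde A\tilde u,\tilde v-\tilde u\rangle\ge\langle\tilde\ell,\tilde v-\tilde u\rangle$. Rearranging,
\begin{align*}
\langle \widetilde A(u-\tilde u),\tilde v-\tilde u\rangle \le \langle \widetilde A u-\tilde\ell,\tilde v-\tilde u\rangle.
\end{align*}

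**Bridging to the unperturbed residual.** The key algebraic manoeuvre is to rewrite $\widetilde A u-\tilde\ell$ as $(Au-\ell)+\big((\widetilde A-A)u-(\tilde\ell-\ell)\big)$; but since the data perturbation in the statement is evaluated at $\tilde v$ rather than $u$, I would instead write $\widetilde A\tilde v-\tilde\ell$ plus $\widetilde A(u-\tilde v)$, and then convert $\widetilde A\tilde v-\tilde\ell$ into $(A\tilde v-\ell)+\big((\widetilde A-A)\tilde v-(\tilde\ell-\ell)\big)$. Then $A\tilde v-\ell = (Au-\ell)+A(\tilde v-u)$. Collecting, the cross term $\langle\widetilde A(u-\tilde u),\tilde v-\tilde u\rangle$ is bounded by a sum of: a residual term $\langle Au-\ell,\tilde v-u\rangle$; a term $\langle Au-\ell, -( \tilde v - \tilde u)\rangle$ absorbed together with the residual tested against $v-u$ (here one also uses the original inequality \eqref{eq:Varineq}, $\langle Au-\ell,v-u\rangle\ge 0$, to massage the residual terms into the combination $\tilde v-u+v-\tilde u$); a term $\langle A(\tilde v-u),\tilde v-\tilde u\rangle$ controlled by continuity $\|A\|\le c$; and the genuine perturbation term $\langle(\widetilde A-A)\tilde v-(\tilde\ell-\ell),\tilde v-\tilde u\rangle$, bounded via the $\widetilde V^*$–$\widetilde V$ duality by $\|(A-\widetilde A)\tilde v-(\ell-\tilde\ell)\|_{\widetilde V^*}\|\tilde v-\tilde u\|_V$. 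The other cross term, $\langle\widetilde A(u-\tilde u),u-\tilde v\rangle$, is estimated directly by continuity of $\widetilde A$: $\le\tilde c\|u-\tilde u\|_V\|u-\tilde v\|_V$ — note $\tilde c$ should be $c$ if $\widetilde A$ is an extension of $A$, matching the constant in the statement.

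**Absorbing and Young's inequality.** After these steps one has, schematically,
\begin{align*}
\tilde\alpha\|u-\tilde u\|_V^2 \le c\|u-\tilde u\|_V\|u-\tilde v\|_V + \langle Au-\ell,\tilde v-u+v-\tilde u\rangle + c\|\tilde v-u\|_V\|\tilde v-\tilde u\|_V + \|(A-\widetilde A)\tilde v-(\ell-\tilde\ell)\|_{\widetilde V^*}\|\tilde v-\tilde u\|_V.
\end{align*}
Wherever $\|\tilde v-\tilde u\|_V$ appears, I would bound it by $\|\tilde v-u\|_V+\|u-\tilde u\|_V$, then apply Young's inequality $ab\le \tfrac{\varepsilon}{2}a^2+\tfrac1{2\varepsilon}b^2$ repeatedly with $\varepsilon$ tuned (typically $\varepsilon$ proportional to $\tilde\alpha$) so that every $\|u-\tilde u\|_V^2$ term carries a coefficient summing to at most $\tilde\alpha/2$, which is then absorbed into the left-hand side. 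Finally, divide by $\tilde\alpha/2$. Tracking constants carefully should produce exactly $2+4c^2/\tilde\alpha^2$ on the approximation term, $4/\tilde\alpha$ on the residual term, and $4/\tilde\alpha^2$ on the consistency term.

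**Main obstacle.** The conceptual steps are standard; the real work — and the main obstacle — is bookkeeping: getting the residual contributions to combine into precisely $\langle Au-\ell,\tilde v-u+v-\tilde u\rangle$ (which requires using \eqref{eq:Varineq} with the right sign and not over-counting), and choosing the Young's-inequality parameters so that the absorbed coefficients come out to the sharp constants claimed rather than something larger. I would expect to do the parameter optimization last, after the inequality is in the schematic form above, to pin down the exact numerical factors.
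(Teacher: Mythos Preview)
Your very first step does not go through in the abstract setting: you write
\[
\tilde\alpha\,\|u-\tilde u\|_V^2 \le \langle \widetilde A(u-\tilde u),\,u-\tilde u\rangle,
\]
but $\widetilde A$ is only defined on $\widetilde V\subset V$, while $u\in K\subset V$ need not lie in $\widetilde V$. So $\widetilde A u$ is in general undefined, and every subsequent appearance of $\widetilde A(u-\tilde u)$ or $\widetilde A u$ in your outline is illegitimate. This is not a cosmetic issue---in the paper's main applications (e.g.\ $\widetilde A$ given by a quadrature rule on the finite element space $V_{hp}$) the perturbed operator genuinely has no meaning on the continuous solution $u$. The same problem explains your confusion about the constant: you end up with the continuity constant $\tilde c$ of $\widetilde A$ and then have to argue it ``should be $c$'', which it need not be.

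The paper's proof sidesteps both problems by a small but essential reorganisation: it starts from $\tilde\alpha\,\|\tilde u-\tilde v\|_{\widetilde V}^2\le\langle\widetilde A(\tilde u-\tilde v),\tilde u-\tilde v\rangle$, which is legitimate since $\tilde u,\tilde v\in\widetilde K\subset\widetilde V$. After using the two variational inequalities and rearranging, the only continuity estimate needed is for $\langle A(\tilde v-u),\tilde v-\tilde u\rangle$, which involves $A$ (defined on all of $V$) and hence produces the constant $c$. One Young inequality absorbs $\tfrac{\tilde\alpha}{2}\|\tilde u-\tilde v\|_V^2$ into the left, and only \emph{after} bounding $\|\tilde u-\tilde v\|_V^2$ does one invoke the triangle inequality $\|u-\tilde u\|_V^2\le 2\|u-\tilde v\|_V^2+2\|\tilde u-\tilde v\|_V^2$ to reach the claimed estimate. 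Your algebraic manipulations from ``Bridging to the unperturbed residual'' onward are essentially the right ones, but they must be run on $\tilde u-\tilde v$ rather than $u-\tilde u$; doing so also makes your final absorption step cleaner, since there is no need to re-route $\|\tilde v-\tilde u\|_V$ through $u$.
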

    \begin{proof}
        Using \eqref{eq:Varineq} and \eqref{eq:PerturbedVarIneq}, the Cauchy-Schwarz inequality, and Young's inequality, we estimate
        \begin{align*}
            \tilde{\alpha}\, \| \tilde{u} - \tilde{v} \|_V^2 & = \tilde{\alpha}\, \| \tilde{u} - \tilde{v} \|_{\widetilde{V}}^2 \\
            &\leq \langle \widetilde{A} (\tilde{u} - \tilde{v}), \tilde{u} - \tilde{v} \rangle\\
            &\leq \langle \tilde{\ell} , \tilde{u} - \tilde{v} \rangle - \langle \widetilde{A} \tilde{v}, \tilde{u} - \tilde{v} \rangle\\
            &\leq \langle \tilde{\ell} , \tilde{u} - \tilde{v} \rangle + \langle Au- \ell , v-u \rangle   - \langle \widetilde{A} \tilde{v}, \tilde{u} - \tilde{v} \rangle\\
            & = \langle  A\tilde{v} -\tilde{\ell} , \tilde{v} - \tilde{u} \rangle + \langle Au- \ell , v-u \rangle  - \langle A\tilde{v} , \tilde{v}-\tilde{u} \rangle   - \langle \widetilde{A} \tilde{v}, \tilde{u} - \tilde{v} \rangle\\
            & = \langle Au- \ell  , \tilde{v} -\tilde{u}  \rangle + \langle Au- \ell , v-u \rangle + \langle A(\tilde{v}-u) , \tilde{v}-\tilde{u} \rangle  +  \langle (A- \widetilde{A}) \tilde{v}, \tilde{u} - \tilde{v} \rangle + \langle \ell-\tilde{\ell}  , \tilde{v} -\tilde{u}  \rangle\\
            & \leq \langle Au- \ell  , \tilde{v}-u +v -\tilde{u}  \rangle  + c\,\| u-\tilde{v}\|_V \,\|\tilde{u}-\tilde{v} \|_V  +  \|(A- \widetilde{A}) \tilde{v}-(\ell-\tilde{\ell})\|_{\widetilde{V}^*}\, \| \tilde{u} - \tilde{v} \|_{\widetilde{V}} \\
            & \leq \langle Au- \ell  , \tilde{v}-u +v -\tilde{u}  \rangle  + \frac{1}{2\tilde{\alpha}}\,\left( c\,\| u-\tilde{v}\|_V   +   \|(A- \widetilde{A}) \tilde{v}-(\ell-\tilde{\ell})\|_{\widetilde{V}^*}  \right)^2 + \frac{\tilde{\alpha}}{2}\, \|\tilde{u}-\tilde{v} \|_V^2.
        \end{align*}
        This gives
        \begin{align*}
            \| \tilde{u} - \tilde{v} \|_V^2  \leq \frac{2}{\tilde{\alpha}}\, \langle Au- \ell  , \tilde{v}-u +v -\tilde{u}  \rangle  + \frac{2c^2}{\tilde{\alpha}^2}\,\| u-\tilde{v}\|_V^2   +  \frac{2}{\tilde{\alpha}^2}\, \|(A- \widetilde{A}) \tilde{v} -(\ell-\tilde{\ell})\|_{\widetilde{V}^*}^2.
        \end{align*}
        Hence, exploiting $\| u - \tilde{u} \|_V^2 \leq 2\,\| u - \tilde{v} \|_V^2 + 2\,\|  \tilde{u} - \tilde{v} \|_V^2$ completes the proof.
    \end{proof}
    \begin{remark}
        Clearly, Theorem~\ref{thm:abstractStrangFalk} contains the pure Falk Theorem (i.e.~$\widetilde{A}=A$, $\tilde{\ell}=\ell$ and $\widetilde{V} = V$) and the pure Strang Lemma (i.e.~$K$ and $\widetilde{K}$ are vector spaces thus $Au-\ell = 0$), but both with slightly larger error constants than the original results. 
    \end{remark}
    \begin{remark}\label{rem:uniqueness}
        At this stage, Theorem~\ref{thm:abstractStrangFalk} admits an alternative proof for the uniqueness and the Lipschitz dependence on the data $\ell$ and $\tilde{\ell}$.
        
        The uniqueness follows directly from Theorem~\ref{thm:abstractStrangFalk} by setting $\widetilde{V}=V$, $\widetilde{K}=K$, $\widetilde{A}=A$, $\tilde{\ell}=\ell$, $\tilde{v}=u$ and $v=\tilde{u}$, where $\tilde{u}$ is assumed to be an additional solution to \eqref{eq:Varineq}.
        
        To prove the Lipschitz dependency, assume $u$, $\tilde{u}$ is the solution of \eqref{eq:Varineq} to the data $\ell$, $\tilde{\ell}$, respectively. Then, by setting $\widetilde{V}=V$, $\widetilde{K}=K$, $\widetilde{A}=A$, $\tilde{v}=u$ and $v=\tilde{u}$, Theorem~\ref{thm:abstractStrangFalk} yields with $\tilde{\alpha} = \alpha$ that
        \begin{align*}
            \| u - \tilde{u} \|_V \leq  \frac{2}{\alpha}  \|\ell-\tilde{\ell}\|_{V^*}
        \end{align*}
        and thus recovers the Lipschitz-continuous dependency on the data $\ell$, but with a larger Lipschitz-constant (by a factor of 2) than obtained by a direct proof resulting from the Strang Lemma part in the proof of Theorem~\ref{abstractError}.
    \end{remark}
    The main application of Theorem~\ref{thm:abstractStrangFalk} lies in the a priori error estimate for Galerkin methods, in which the convex set $K$ is discretized to some $\widetilde{K}$ but also the operator $A$ and the right-hand side data $\ell$ themselves are discretized to some $\widetilde{A}$ and $\tilde{\ell}$.

\section{Perturbation introduced after discretization} \label{sec:pert_by_approx}
    Discretizing the variational inequality problem of \eqref{eq:Varineq}
    typically leads to the discrete problem of finding a $u^* \in \widetilde{K} \subset \widetilde{V}$ such that
    \begin{align} \label{eq:vi_disc}
        \langle A u^* - \ell, \tilde{v}-u^* \rangle \geq 0 
    \end{align}
    for all $\tilde{v} \in \widetilde{K}$. In most cases, computing the solution $u^*$ is impossible. Instead, the operator $A$ and data $\ell$ are replaced by approximative counterparts $\widetilde{A}$ and $\tilde{\ell}$ in order to produce a solvable problem. Thus, we end up solving only the perturbed (discrete) problem \eqref{eq:PerturbedVarIneq}.
    
    This kind of perturbation may result from an inexact quadrature or desired approximations to speed up the computation. The latter frequently occurs, for instance, in boundary element methods with their non-local operators, and we refer to the book \citep{steinbach2008numerical} and the references therein for an introduction to fast BEM by approximating the operator $A$.
    
    Applying Theorem~\ref{thm:abstractStrangFalk} gives us an easy way to bound the total approximation error $u-\tilde{u}$. If we apply Theorem~\ref{thm:abstractStrangFalk} slightly differently, however, we can show that the constants in front of the first two terms on the right-hand side in Theorem~\ref{thm:abstractStrangFalk} are indeed independent of $\tilde{\alpha}$ and thus independent of the approximation $\widetilde{A} \approx A$.
    \begin{corollary}\label{thm:AprioriErrorPertByApprox}
        There holds
        \begin{align*}
            \| u - \tilde{u} \|_V^2 \leq \left(4+ \frac{8c^2}{\alpha^2} \right)   \| u - \tilde{v} \|_V^2 + \frac{8}{\alpha}\,\langle Au- \ell  , \tilde{v}-u +v -u^*  \rangle  + \frac{8}{\tilde{\alpha}^2}\,  \|(A- \widetilde{A}) u^*-(\ell-\tilde{\ell})\|_{\widetilde{V}^*}^2
        \end{align*}
        for all $v\in K$ and all $\tilde{v} \in \widetilde{K}$.
    \end{corollary}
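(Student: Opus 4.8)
The plan is to route the error through the solution $u^{*}\in\widetilde K$ of the \emph{unperturbed} discrete problem \eqref{eq:vi_disc}, splitting
\begin{align*}
  \| u - \tilde u\|_V^2 \le 2\,\| u - u^{*}\|_V^2 + 2\,\| u^{*} - \tilde u\|_V^2 ,
\end{align*}
and then to bound the two pieces by different means: the first by Theorem~\ref{thm:abstractStrangFalk} applied in the situation with \emph{no operator/data perturbation} (which keeps all constants tied to $\alpha$ rather than $\tilde\alpha$), the second by a short direct Strang-type comparison between the two discrete inequalities. The point that makes the constants of the first two terms independent of $\tilde\alpha$ is precisely this detour through $u^{*}$ instead of applying Theorem~\ref{thm:abstractStrangFalk} directly to $u-\tilde u$.

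For the first piece I would invoke Theorem~\ref{thm:abstractStrangFalk} with the substitutions $\widetilde V\rightsquigarrow\widetilde V$, $\widetilde K\rightsquigarrow\widetilde K$, but $\widetilde A\rightsquigarrow A|_{\widetilde V}$ and $\tilde\ell\rightsquigarrow\ell$. With this choice the solution of the ``perturbed'' inequality \eqref{eq:PerturbedVarIneq} is exactly $u^{*}$, the relevant coercivity and continuity constants become $\alpha$ and $c$, and the last term of Theorem~\ref{thm:abstractStrangFalk} vanishes identically since $(A-\widetilde A)\tilde v-(\ell-\tilde\ell)=0$. This gives, for all $v\in K$ and $\tilde v\in\widetilde K$,
\begin{align*}
  \| u - u^{*}\|_V^2 \le \Bigl(2+\tfrac{4c^{2}}{\alpha^{2}}\Bigr)\| u - \tilde v\|_V^2 + \tfrac{4}{\alpha}\,\langle Au-\ell,\ \tilde v - u + v - u^{*}\rangle ,
\end{align*}
so that $2\| u-u^{*}\|_V^2$ already produces the first two terms on the right-hand side of the Corollary with exactly the asserted constants.

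For the second piece I would test \eqref{eq:vi_disc} with $\tilde v=\tilde u\in\widetilde K$ and \eqref{eq:PerturbedVarIneq} with $\tilde v=u^{*}\in\widetilde K$ and add, obtaining $\langle (Au^{*}-\ell)-(\widetilde A\tilde u-\tilde\ell),\ \tilde u-u^{*}\rangle\ge 0$. Writing $Au^{*}-\widetilde A\tilde u=\widetilde A(u^{*}-\tilde u)+(A-\widetilde A)u^{*}$ — legitimate because $Au^{*}\in V^{*}\subset\widetilde V^{*}$, so $(A-\widetilde A)u^{*}\in\widetilde V^{*}$ — this rearranges to $\langle\widetilde A(u^{*}-\tilde u),u^{*}-\tilde u\rangle\le\langle(\ell-\tilde\ell)-(A-\widetilde A)u^{*},\ u^{*}-\tilde u\rangle$. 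Applying ellipticity of $\widetilde A$ on the left and the Cauchy--Schwarz inequality on the right, and then using $\| u^{*}-\tilde u\|_V=\| u^{*}-\tilde u\|_{\widetilde V}$, yields $\| u^{*}-\tilde u\|_V\le\tfrac{1}{\tilde\alpha}\|(A-\widetilde A)u^{*}-(\ell-\tilde\ell)\|_{\widetilde V^{*}}$. Feeding $2\| u^{*}-\tilde u\|_V^2$ (a coefficient $\tfrac{2}{\tilde\alpha^{2}}$, hence a fortiori $\le\tfrac{8}{\tilde\alpha^{2}}$) into the decomposition closes the argument.

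The computation is essentially routine; the two places that need attention are (i) verifying that the hypotheses of Theorem~\ref{thm:abstractStrangFalk} really hold under the substitution $\widetilde A=A|_{\widetilde V}$ — that the constants collapse to $\alpha,c$ and that the perturbation term drops — and (ii) keeping the signs straight when the two discrete variational inequalities are added and reading $(A-\widetilde A)u^{*}$ correctly as an element of $\widetilde V^{*}$. Neither is a genuine obstacle, but (i) is where the whole ``independence of $\tilde\alpha$'' effect is manufactured, so it is the step I would state most carefully.
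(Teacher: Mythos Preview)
Your proof is correct and follows essentially the same approach as the paper: split through $u^{*}$, apply Theorem~\ref{thm:abstractStrangFalk} with no operator/data perturbation for $\|u-u^{*}\|_V^2$, and control $\|u^{*}-\tilde u\|_V^2$ by a Strang-type comparison of the two discrete inequalities. The only cosmetic difference is that the paper obtains the second bound by invoking Theorem~\ref{thm:abstractStrangFalk} once more (with $\tilde v=u^{*}$, $v=\tilde u$), yielding the constant $4/\tilde\alpha^{2}$, whereas your direct argument gives the sharper $1/\tilde\alpha^{2}$; both fit under the stated $8/\tilde\alpha^{2}$.
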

    \begin{proof}
        We apply Theorem~\ref{thm:abstractStrangFalk} to \eqref{eq:Varineq} and \eqref{eq:vi_disc} and obtain the classical result by Falk \citep{falk1974error} (up to some enlarged constants) as no approximation of the operator and the right hand side data occurs, i.e.
        \begin{align*}
            \| u - u^* \|_V^2 \leq \left(2+ \frac{4c^2}{\alpha^2} \right)   \| u - \tilde{v} \|_V^2 + \frac{4}{\alpha}\,\langle Au- \ell  , \tilde{v}-u +v -u^*  \rangle 
        \end{align*}
        for all $v\in K$ and all $\tilde{v} \in \widetilde{K}$.
        
        Next, we apply Theorem~\ref{thm:abstractStrangFalk} to \eqref{eq:vi_disc} and \eqref{eq:PerturbedVarIneq} with $\tilde{v} = u^*$ and $v = \tilde{u}$ as no change in the convex set $\widetilde{K}$ occurs. Thus, we obtain 
        \begin{align*}
            \| u^* - \tilde{u} \|_V^2 =   \| u^* - \tilde{u} \|_{\widetilde{V}}^2 \leq  \frac{4}{\tilde{\alpha}^2}\,  \|(A- \widetilde{A}) u^*-(\ell-\tilde{\ell})\|_{\widetilde{V}^*}^2 .
        \end{align*}
        Application of the triangle inequality, i.e.~$\| u - \tilde{u} \|_V^2 \leq 2\| u - u^* \|_V^2 + 2\| u^* - \tilde{u}\|_V^2$, yields the assertion.
    \end{proof}
    We emphasize that the first two terms $\| u - \tilde{v} \|_V^2 $ and $\langle Au- \ell  , \tilde{v}-u +v -u^*  \rangle $ in Theorem~\ref{thm:AprioriErrorPertByApprox} form the classical a priori error estimate (see e.g.~\citep{RodriguesObstacle}), and bounds of these can be found in the literature for the specific problem at hand (see e.g.~\citep{OdenKikuchiPorousMedia}).
    
    The third and last term $\|(A- \widetilde{A}) u^*-(\ell-\tilde{\ell})\|_{\widetilde{V}^*}^2$ is the approximation error of the operator applied to some (uniformly) bounded function $u_N$ and of the right-hand side. Both approximations in that term are independent of the underlying variational problem, and form a literature strand of its own. 

\subsection{Quadrature error and the Laplacian obstacle problem}\label{sec:perturbed_by_quad}
    In this section, we analyze the perturbation (error) emerging from an inexact quadrature, which is used in the context of a Laplacian obstacle problem. Let $\Omega \subset \mathbb{R}^d$, $d\in \{2,3\}$, be a bounded, polygonal Lipschitz domain. We seek a function $u$ that solves
    \begin{align*}
        -\operatorname{div} (a\,\nabla u) \geq f, \  u \geq \psi, \  (f+\operatorname{div} (a\,\nabla u))\,(u-\psi)&=0 \text{ in } \Omega\\ u &= 0 \text{ on } \partial \Omega
    \end{align*}
    in a weak sense, where $a\in W^{r,\infty}(\Omega)$ with $a\geq a_0 >0$ and $r \geq d$, $f \in L^2(\Omega)$ and $\psi \in C^0(\Omega)$ with $\psi|_{\partial \Omega} \leq 0$. Its standard variational inequality formulation is to find a $u \in K = \left\{ v \in H^1_0(\Omega) \mid v \geq \psi \right\}$ such that
    \begin{align} \label{eq:obstacleVI}
        \int_\Omega a\,\nabla u\, \nabla (v-u) \, dx \geq \int_\Omega f (v-u) \, dx
    \end{align}
    for all $v \in K.$
    Obviously, we obtain \eqref{eq:obstacleVI} from \eqref{eq:Varineq} by setting 
    \begin{align*}
        V :=H^1_0(\Omega),\qquad
        V^*:= H^{-1}(\Omega), \qquad 
        K := \left\{ v \in H^1_0(\Omega) \mid v \geq \psi \right\},
    \end{align*}
    and
    \begin{align*}
        \langle Av,w \rangle  := (a\,\nabla v, \nabla w)_{L^2(\Omega)}, \qquad\langle
        \ell,v\rangle:=(f,v)_{L^2(\Omega)}
    \end{align*}
    for any $v,w\in H^1_0(\Omega)$ and where $(\cdot,\cdot)_{L^2(\Omega)}$ denotes the standard $L^2(\Omega)$ inner product. It is well known that $A:H^1_0(\Omega)\rightarrow H^{-1}(\Omega)$ is continuous and $H^1_0(\Omega)$-elliptic.
    %
    % Subsection: A higher order $h$-finite element discretization
    %
    \subsection{\texorpdfstring{A higher-order $h$-finite element discretization}{A higher-order $h$-finite element discretization}} \label{sec:hp_discretization}
        Let $\mathcal{D}_h$ be a locally quasi-uniform and $\gamma$-shape regular decomposition of $\Omega$ into triangles or parallelograms if $d=2$ or tetrahedrons or parallelepipeds if $d=3$. For an element $D\in\mathcal{D}_h$, the local element size is given by $h_D=\operatorname{diam}(D)$ and the global mesh size by $h=\max_{D\in\mathcal{D}_h} h_D$. Furthermore, let $p\geq 1$ be the uniform polynomial degree on $\mathcal{D}_h$. We refer to \cite[Section~1.2]{melenk2001residual} for a definition of locally quasi-uniform and $\gamma$-shape regular decompositions.
        
        In particular, we have an affine and bijective mapping $F_D:\hat{D} \rightarrow D$ from the unit triangle, the unit square, the unit tetrahedron, or the unit cube $\hat{D}$, respectively, onto an element $D$ of $\mathcal{D}_h$. 
        We point out that the $\gamma$-shape regularity of $\mathcal{D}_h$ means
        \begin{equation*}
            h_D^{-1} \|\nabla F_D\|+h_D\|(\nabla F_D)^{-1}\|\leq \gamma
        \end{equation*}
        for some constant $\gamma>0$ and Euclidean norm $\|\cdot\|$, and implies
        \begin{equation}\label{nablaFT}
            \gamma^{-1} \|\nabla (f\circ F_D)(\hat{x})\|\leq h_D \|\nabla f\circ F_D(\hat{x})\|\leq \gamma\, \|\nabla (f\circ F_D)(\hat{x})\|
        \end{equation}
        for any $f\in H^1(D)$ and $\hat{x}\in \hat{D}$.
        Using the polynomial spaces 
        \begin{equation*}
            \mathbb{P}_{p}(D):=\operatorname{span} \Big\{ \prod_{i=1}^d x_i^{j_i} \:| \: j\in \mathcal{I}(D)\Big\}
        \end{equation*}
        with
        \begin{equation*}
            \mathcal{I}(D):= \left\{j\in\mathbb{N}^d \mid
            \begin{cases}
                \sum_{i=1}^d j_i\leq p ,& D\text{ is a triangle or tetrahedron},\\
                j_1,\ldots,j_d\leq p, & D\text{ is a parallelogram or parallelepiped}
            \end{cases}\right\}
        \end{equation*}
        we define the discretization spaces 
        \begin{align*}
            W_{hp}&:= \left\{ v_{hp} \in L^2(\Omega) : v_{hp}|_D \circ F_D \in \mathbb{P}_{p}(\hat{D})\ \:\forall D \in \mathcal{D}_h \right\},\\
            V_{hp}&:=W_{hp}\cap H^1_0(\Omega).
        \end{align*}
        Let $\{\hat{\zeta}_i\}_{i=0}^{n_p}\subset \hat{D}$ for some $n_p \in \mathbb{N}_0$ and set
        \begin{align*}
            G_{hp} := \bigcup_{D\in \mathcal{D}_h} \bigcup_{i=0}^{n_p} F_D(\hat{\zeta}_i).
        \end{align*}
        Therewith, we define the discrete, non-empty, closed and convex set 
        \begin{align*}
            K_{hp} := \left\{ v_{hp} \in V_{hp} \mid v_{hp}(\zeta) \geq \psi(\zeta) \: \: \forall \zeta \in G_{hp} \right\}.
        \end{align*}
        The discrete variational inequality problem becomes: Find a $u_{hp} \in K_{hp}$ such that
        \begin{align*} %\label{eq:Discrete_obstacleVI}
            \int_\Omega a\,\nabla u_{hp} \nabla (v_{hp}-u_{hp}) \, dx \geq \int_\Omega f  (v_{hp}-u_{hp}) \, dx 
        \end{align*}
        for all $v_{hp} \in K_{hp}.$    
    %
    % Subsection:Perturbations by quadrature
    %
    \subsection{Perturbations by quadrature}\label{sec: PerturbationQuadrature}
        Let $\{\hat{\xi}_i\}_{i=0}^{m_p}\subset \hat{D}$ for some $m_p\in\mathbb{N}_0$ with $m_p\geq \dim \mathbb{P}_p(\hat{D})$ be the quadrature points and $\{\omega_i\}_{i=0}^{m_p} \subset \mathbb{R}^+$ their corresponding positive quadrature weights. Furthermore, we set $\{\xi_i^D\}_{i=0}^{m_p}:=\{F_D(\hat{\xi}_i)\}_{i=0}^{m_p}$ and $\{\omega_i^D\}_{i=0}^{m_p}:=\{|\det \nabla F_D|\omega_i\}_{i=0}^{m_p}$ for $D\in\mathcal{D}_h$. The quadrature formulas are thus
        \begin{align*}
            Q_{\hat{D}}(\hat{f})&:=\sum_{i=0}^{m_p} \omega_i  \hat{f}(\hat{\xi}_i),\quad  Q_D (f) := \sum\limits_{i=0}^{m_p}\omega_i^D f(\xi_i^D) = |\det \nabla F_D|Q_{\hat{D}}(f\circ F_D),\\
            Q_{hp}(\tilde{f})&:=\sum_{D\in \mathcal{D}_h} Q_D(\tilde{f}),  
        \end{align*}
        where $\hat{f}\in H^{1+t}(\hat{D})$, $f\in H^{1+t}(D)$ and $\tilde{f}\in H^{1+t}(\Omega)$ with $t > d/2-1 \geq 0$, as the functions have to be evaluated in specific points.
        We assume that the quadrature formulas $Q_{\hat{D}}$ and $Q_D$ are of order $r\in\mathbb{N}$, i.e.~polynomials up to degree $r-1$ are integrated exactly, and that for all $v\in \mathbb{P}_p(D)$ it holds that 
        \begin{align}\label{eq: admissable quadrature}
            \forall\: 0\leq i\leq m_p: \nabla v(\xi_i^D)=0\quad  \Rightarrow\quad v \equiv const.
        \end{align}
        Furthermore, we note that $|\det \nabla F_D|$ is constant as $F_D$ is affine as well as
        \begin{equation}\label{IntegralFT}
            \int_D f\,dx = |\det \nabla F_D| \int_{\hat{D}} f\circ F_D\,d\hat{x}
        \end{equation}
        for any $f\in L^2(D)$.
        Next, we consider the perturbed problem of finding a $\tilde{u}_{hp} \in K_{hp}$ such that
        \begin{align*} %\label{eq:Quad:Discrete_obstacleVI}
            Q_{hp}\left(a\,\nabla \tilde{u}_{hp}^\top \nabla (v_{hp}-\tilde{u}_{hp}) \right) \geq Q_{hp}\left( f \, (v_{hp}-\tilde{u}_{hp}) \right) 
        \end{align*}
        for all $v_{hp} \in K_{hp}$, provided that $f$ is continuous.
        We define $\widetilde{A}:V_{hp}\rightarrow V_{hp}^*$ and $\tilde{\ell}\in V_{hp}^*$ as
        \begin{equation*}%\label{QtildeAndEll}
            (\widetilde{A}v_{hp})(z_{hp}):= Q_{hp}(a\,\nabla v_{hp}^\top \nabla z_{hp}),\qquad 
            \tilde{\ell}(z_{hp}):= Q_{hp}(fz_{hp})
        \end{equation*}
        for any $v_{hp},z_{hp}\in V_{hp}$. Before we can prove the continuity and ellipticity of $\widetilde{A}$, we need to establish the following three auxiliary results.
        \begin{lemma}\label{QCauchy}
            It holds
            \begin{equation*}
                Q_{\hat{D}}(\hat{f}^\top\hat{g}) \leq \bigg[Q_{\hat{D}}(\|\hat{f}\|^2)\bigg]^{1/2}\, \bigg[Q_{\hat{D}}(\|\hat{g}\|^2)\bigg]^{1/2}
            \end{equation*}
            for all $\hat{f},\hat{g}\in [H^{1+t}(\hat{D})]^k$ and some $k\in \mathbb{N}$ with $t > d/2-1 \geq 0$.
        \end{lemma}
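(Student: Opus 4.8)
The plan is to recognize the claimed inequality as nothing more than the Cauchy--Schwarz inequality for the symmetric, positive semidefinite bilinear form
\begin{align*}
  b(\hat f,\hat g) := Q_{\hat D}(\hat f^\top \hat g) = \sum_{i=0}^{m_p}\omega_i\, \hat f(\hat\xi_i)^\top \hat g(\hat\xi_i)
\end{align*}
acting on continuous $\mathbb{R}^k$-valued functions on $\hat D$. Since $t > d/2-1$, i.e.\ $1+t > d/2$, the Sobolev embedding $H^{1+t}(\hat D)\hookrightarrow C^0(\overline{\hat D})$ ensures the point evaluations $\hat f(\hat\xi_i)$ and $\hat g(\hat\xi_i)$ are well-defined for $\hat f,\hat g\in [H^{1+t}(\hat D)]^k$, so $b$ is well-defined on this class; it is positive semidefinite because each weight $\omega_i$ is strictly positive by assumption.

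Concretely, I would write, with $\hat f = (\hat f_1,\dots,\hat f_k)$ and $\hat g = (\hat g_1,\dots,\hat g_k)$,
\begin{align*}
  Q_{\hat D}(\hat f^\top\hat g) = \sum_{i=0}^{m_p}\sum_{j=1}^k \bigl(\sqrt{\omega_i}\,\hat f_j(\hat\xi_i)\bigr)\bigl(\sqrt{\omega_i}\,\hat g_j(\hat\xi_i)\bigr),
\end{align*}
and then apply the ordinary Cauchy--Schwarz inequality in $\mathbb{R}^{(m_p+1)k}$ to the two families $\{\sqrt{\omega_i}\,\hat f_j(\hat\xi_i)\}_{i,j}$ and $\{\sqrt{\omega_i}\,\hat g_j(\hat\xi_i)\}_{i,j}$. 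This yields the bound $\bigl(\sum_{i,j}\omega_i \hat f_j(\hat\xi_i)^2\bigr)^{1/2}\bigl(\sum_{i,j}\omega_i \hat g_j(\hat\xi_i)^2\bigr)^{1/2}$, and recombining $\sum_{j=1}^k \hat f_j(\hat\xi_i)^2 = \|\hat f(\hat\xi_i)\|^2$, and likewise for $\hat g$, identifies the two factors as $[Q_{\hat D}(\|\hat f\|^2)]^{1/2}$ and $[Q_{\hat D}(\|\hat g\|^2)]^{1/2}$, which is the assertion.

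There is essentially no serious obstacle here; the statement is a routine consequence of finite-dimensional Cauchy--Schwarz once the ambient function space is fixed. The only points requiring a word of care are the well-definedness of the point evaluations, which is exactly why the Sobolev exponent $1+t$ must exceed $d/2$, and the strict positivity of the quadrature weights assumed above, which is what makes the square-root splitting $\omega_i = \sqrt{\omega_i}\cdot\sqrt{\omega_i}$ legitimate.
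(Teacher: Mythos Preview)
Your proof is correct and follows essentially the same approach as the paper: write $Q_{\hat D}(\hat f^\top\hat g)$ as a double sum, split each summand via $\omega_i=\sqrt{\omega_i}\cdot\sqrt{\omega_i}$, apply the finite-dimensional Cauchy--Schwarz inequality, and recombine using $\sum_{j=1}^k \hat f_j(\hat\xi_i)^2=\|\hat f(\hat\xi_i)\|^2$. Your added remark on the Sobolev embedding justifying the point evaluations is a welcome clarification that the paper leaves implicit.
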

        \begin{proof}
            From the Cauchy-Schwarz inequality and $\omega_i>0$ we obtain
            \begin{align*}
                Q_{\hat{D}}(\hat{f}^\top \hat{g}) & =\sum_{i=0}^{m_p} (\omega_i^{1/2}\; \hat{f}(\hat{\xi_i}))^\top\: (\omega_i^{1/2}\; \hat{g}(\hat{\xi_i}))\\
                &\leq \left(\sum_{i=0}^{m_p} \sum_{j=1}^k \omega_i\, \hat{f}_j^2(\hat{\xi_i})\right)^{1/2}\cdot \left(\sum_{i=0}^{m_p}\sum_{j=1}^k\omega_i\, \hat{g}_j^2(\hat{\xi_i})\right)^{1/2}\\
                & = \left(\sum_{i=0}^{m_p} \omega_i\|\hat{f}(\hat{\xi_i})\|^2\right)^{1/2}\cdot \left(\sum_{i=0}^{m_p}\omega_i \|\hat{g}(\hat{\xi_i})\|^2\right)^{1/2}. 
            \end{align*}
        \end{proof}
        \begin{lemma}\label{Qfinite}
            There exist constants $c_p,d_p>0$ depending on $p$ such that
            \begin{align*}
                c_p^{-1}\,\|\nabla \hat{f}\|_{L^2(\hat{D})}^2 \leq\, &Q_{\hat{D}}(\|\nabla \hat{f}\|^2) \leq c_p\, \|\nabla \hat{f}\|_{L^2(\hat{D})}^2,\\
                d_p^{-1}\,\|\hat{f}\|_{L^2(\hat{D})}^2 \leq\, &Q_{\hat{D}}(\hat{f}^{\,2}) \leq d_p\, \|\hat{f}\|_{L^2(\hat{D})}^2\quad
            \end{align*}
            for all $\hat{f}\in \mathbb{P}_p(\hat{D})$.
        \end{lemma}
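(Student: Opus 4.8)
The plan is to read both lines of the claim as \emph{norm-equivalence statements on the finite-dimensional space} $\mathbb{P}_p(\hat D)$ — in the case of the first line, on the quotient $\mathbb{P}_p(\hat D)/\mathbb{R}$ modulo the constants — and then to invoke the elementary fact that any two seminorms on a finite-dimensional space with the same kernel are equivalent. All constants generated this way depend only on $p$, on the fixed reference cell $\hat D$, and on the fixed quadrature rule $Q_{\hat D}$; in particular they do not depend on $D$ or on the mesh, which is the point of stating the lemma on $\hat D$.

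First I would dispose of the ``$\leq$'' directions, which are elementary. Since $\omega_i>0$ and $\sum_{i=0}^{m_p}\omega_i=Q_{\hat D}(1)=|\hat D|$ (the rule has order $r\geq 1$), one has $Q_{\hat D}(\|\nabla\hat f\|^2)\leq |\hat D|\,\|\nabla\hat f\|_{L^\infty(\hat D)}^2$ and likewise $Q_{\hat D}(\hat f^{\,2})\leq |\hat D|\,\|\hat f\|_{L^\infty(\hat D)}^2$. A fixed inverse estimate on $\hat D$ for polynomials of degree $\leq p$ (applied component-wise to $\nabla\hat f\in[\mathbb{P}_{p-1}(\hat D)]^d$) bounds $\|\cdot\|_{L^\infty(\hat D)}$ by $C_p\,\|\cdot\|_{L^2(\hat D)}$, giving the two upper bounds with an explicit $p$-dependence.

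The ``$\geq$'' directions are where the assumptions enter. For the gradient line I would observe that $\hat f\mapsto Q_{\hat D}(\|\nabla\hat f\|^2)^{1/2}$ is a seminorm on $\mathbb{P}_p(\hat D)$ — it is the $\ell^2$-norm of the finite vector $(\sqrt{\omega_i}\,\partial_j\hat f(\hat\xi_i))_{i,j}$, hence a seminorm as the composition of a linear map with a norm — and that its kernel is exactly the constants: ``$\supseteq$'' is trivial, and ``$\subseteq$'' is precisely assumption \eqref{eq: admissable quadrature}, transported to $\hat D$ through the affine bijection $F_D$ (which carries $\{\hat\xi_i\}$ onto $\{\xi_i^D\}$ and satisfies $\nabla(v\circ F_D)=(\nabla F_D)^\top\,(\nabla v\circ F_D)$, so that $\nabla\hat f$ vanishes at all $\hat\xi_i$ iff $\nabla f$ vanishes at all $\xi_i^D$). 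Since $\|\nabla\cdot\|_{L^2(\hat D)}$ is another seminorm on $\mathbb{P}_p(\hat D)$ with the same kernel, the two are equivalent on the finite-dimensional quotient $\mathbb{P}_p(\hat D)/\mathbb{R}$, which supplies $c_p$.

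For the $L^2$ line I would show that $\hat f\mapsto Q_{\hat D}(\hat f^{\,2})^{1/2}=\big(\sum_{i=0}^{m_p}\omega_i\,\hat f(\hat\xi_i)^2\big)^{1/2}$ is in fact a genuine \emph{norm} on $\mathbb{P}_p(\hat D)$: by positivity of the weights it vanishes only if $\hat f$ vanishes at all $m_p+1$ quadrature nodes, and since these nodes are unisolvent for $\mathbb{P}_p(\hat D)$ (there are at least $\dim\mathbb{P}_p(\hat D)+1$ of them, coming from the admissible quadrature rule), this forces $\hat f\equiv 0$; equivalence of norms on the finite-dimensional space $\mathbb{P}_p(\hat D)$ then yields the lower bound with constant $d_p$. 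The only non-routine point — and hence the main obstacle — is the identification of these kernels: that the ``gradient-at-nodes'' map annihilates exactly the constants is what \eqref{eq: admissable quadrature} is designed to guarantee, while injectivity of the ``value-at-nodes'' map rests on unisolvence of the quadrature nodes; once these are in hand, everything else is the standard finite-dimensionality argument. (Lemma~\ref{QCauchy} is not needed in the proof, but its proof idea — rewriting $Q_{\hat D}$ of a square as a weighted $\ell^2$-norm — is exactly the observation used above to see that the relevant maps are seminorms.)
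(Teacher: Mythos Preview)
Your approach is essentially the paper's own: verify that $\hat f\mapsto Q_{\hat D}(\|\nabla\hat f\|^2)^{1/2}$ and $\hat f\mapsto Q_{\hat D}(\hat f^{\,2})^{1/2}$ are norms on $\mathbb{P}_p(\hat D)/\mathbb{R}$ and $\mathbb{P}_p(\hat D)$, respectively --- the former via \eqref{eq: admissable quadrature}, the latter via $m_p\geq\dim\mathbb{P}_p(\hat D)$ --- and then invoke finite-dimensional norm equivalence. Your separate treatment of the upper bounds through $L^\infty$ and inverse estimates is more explicit than the paper's (which leaves both directions to norm equivalence), and your observation that the weighted-$\ell^2$ rewriting of $Q_{\hat D}$ yields the seminorm property directly, without appeal to Lemma~\ref{QCauchy}, is correct; the paper cites Lemma~\ref{QCauchy} precisely to supply the triangle inequality.

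One caution: your parenthetical that the nodes are unisolvent for $\mathbb{P}_p(\hat D)$ \emph{because} there are at least $\dim\mathbb{P}_p(\hat D)+1$ of them is not a valid inference in dimension $d\geq 2$ --- cardinality alone does not force unisolvence. The paper cites exactly the same hypothesis $m_p\geq\dim\mathbb{P}_p(\hat D)$ with equal brevity for this step, so you match its level of rigor; but be aware that, strictly speaking, unisolvence of the quadrature nodes for $\mathbb{P}_p(\hat D)$ is an additional implicit assumption here (alternatively, an exactness order $r\geq 2p+1$ would give $Q_{\hat D}(\hat f^{\,2})=\|\hat f\|_{L^2(\hat D)}^2$ outright).
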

        \begin{proof}
            Using $\omega_i>0$, $m_p\geq \dim \mathbb{P}_p(\hat{D})$ and Lemma~\ref{QCauchy} it is easy to verify that $\hat{f}\mapsto (Q_{\hat{D}}(\|\nabla \hat{f}\|^2)^{1/2}$ and $\hat{f}\mapsto (Q_{\hat{D}}(\hat{f}^{\,2}))^{1/2}$ are norms in $\mathbb{P}_p(\hat{D})/\mathbb{R}$ and $\mathbb{P}_p(\hat{D})$, respectively. We note that \eqref{eq: admissable quadrature} is used to guarantee the positive definiteness of the norm in $\mathbb{P}_p(\hat{D})/\mathbb{R}$. The assertion follows from the norm equivalence of finite dimensional spaces.
        \end{proof}
        \begin{lemma}\label{QLem}
            There exist constants $\hat{c},\hat{d}>0$ depending on $p$ such that
            \begin{equation*}
                Q_{hp}(a\,\nabla v_{hp}^\top \nabla z_{hp})\leq \hat{c}\, |v_{hp}|_{H^1(\Omega)}\, |z_{hp}|_{H^1(\Omega)}\: \text{ and }\quad Q_{hp}(f z_{hp})\leq \hat{d}\,  \|z_{hp}\|_{L^2(\Omega)}  
            \end{equation*}
            for all $v_{hp},z_{hp}\in V_{hp}$.
        \end{lemma}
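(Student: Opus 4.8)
The plan is to localize both estimates to a single element $D\in\mathcal D_h$, pull everything back to the reference cell $\hat D$ via $F_D$, apply the discrete Cauchy--Schwarz inequality of Lemma~\ref{QCauchy} and the norm equivalence of Lemma~\ref{Qfinite} on $\hat D$, transform back to $D$, and finally sum over $D$ with one more discrete Cauchy--Schwarz step. The constants $\hat c,\hat d$ obtained this way will depend on $p$ through $c_p$ and $d_p$, and otherwise only on the fixed data $\gamma$, $\|a\|_{L^\infty(\Omega)}$ (resp.\ $\|f\|_{L^\infty(\Omega)}$) and $|\Omega|$; the crucial bookkeeping is that the powers of $h_D$ and of $|\det\nabla F_D|$ collected under the affine change of variables cancel exactly.

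For the bilinear term, fix $D$ and set $\hat v:=v_{hp}|_D\circ F_D$, $\hat z:=z_{hp}|_D\circ F_D\in\mathbb P_p(\hat D)$ and $N:=(\nabla F_D)^{-\top}$. Using $Q_D(g)=|\det\nabla F_D|\,Q_{\hat D}(g\circ F_D)$ together with the chain rule $(\nabla v_{hp})\circ F_D=N\nabla\hat v$, the integrand becomes $(a\circ F_D)\,(N\nabla\hat v)^\top(N\nabla\hat z)$. I would then apply Lemma~\ref{QCauchy} with $\hat f:=(a\circ F_D)^{1/2}N\nabla\hat v$ and $\hat g:=(a\circ F_D)^{1/2}N\nabla\hat z$ — these products are continuous since $\hat v,\hat z$ are polynomial and $a\in W^{r,\infty}(\Omega)$ with $a\geq a_0>0$, so the point evaluations underlying $Q_{\hat D}$ are well defined and the argument of Lemma~\ref{QCauchy} applies (alternatively one may keep $a$ inside the sum and use $\omega_i>0$, $a>0$ to bound it pointwise by $\|a\|_{L^\infty(\Omega)}$). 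This yields $Q_{\hat D}((a\circ F_D)(N\nabla\hat v)^\top(N\nabla\hat z))\le[Q_{\hat D}((a\circ F_D)\|N\nabla\hat v\|^2)]^{1/2}[Q_{\hat D}((a\circ F_D)\|N\nabla\hat z\|^2)]^{1/2}$, and then $(a\circ F_D)\le\|a\|_{L^\infty(\Omega)}$, $\|N\|\le\gamma h_D^{-1}$ (from $\gamma$-shape regularity) and Lemma~\ref{Qfinite} give $Q_{\hat D}((a\circ F_D)\|N\nabla\hat v\|^2)\le\|a\|_{L^\infty(\Omega)}\gamma^2 h_D^{-2}c_p\|\nabla\hat v\|_{L^2(\hat D)}^2$. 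Transforming back with \eqref{nablaFT} and \eqref{IntegralFT} gives $\|\nabla\hat v\|_{L^2(\hat D)}^2\le\gamma^2 h_D^2|\det\nabla F_D|^{-1}|v_{hp}|_{H^1(D)}^2$; multiplying by the outer factor $|\det\nabla F_D|$ from $Q_D$, the determinant and the powers $h_D^{-2}h_D^2$ cancel and I obtain $Q_D(a\,\nabla v_{hp}^\top\nabla z_{hp})\le\|a\|_{L^\infty(\Omega)}\gamma^4 c_p\,|v_{hp}|_{H^1(D)}|z_{hp}|_{H^1(D)}$. Summing over $D$ and applying $\sum_{D}|v_{hp}|_{H^1(D)}|z_{hp}|_{H^1(D)}\le|v_{hp}|_{H^1(\Omega)}|z_{hp}|_{H^1(\Omega)}$ closes the first estimate with $\hat c=\|a\|_{L^\infty(\Omega)}\gamma^4 c_p$.

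For the linear term the same scheme is shorter. On $\hat D$, Lemma~\ref{QCauchy} with $k=1$ gives $Q_{\hat D}((f\circ F_D)(z_{hp}\circ F_D))\le Q_{\hat D}((f\circ F_D)^2)^{1/2}Q_{\hat D}((z_{hp}\circ F_D)^2)^{1/2}$. Since the quadrature has order $r\ge1$ it integrates constants exactly, so $Q_{\hat D}((f\circ F_D)^2)\le\|f\|_{L^\infty(\Omega)}^2\,Q_{\hat D}(1)=\|f\|_{L^\infty(\Omega)}^2|\hat D|$, while Lemma~\ref{Qfinite} and \eqref{IntegralFT} give $Q_{\hat D}((z_{hp}\circ F_D)^2)\le d_p|\det\nabla F_D|^{-1}\|z_{hp}\|_{L^2(D)}^2$. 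Multiplying by $|\det\nabla F_D|$ and using $|\det\nabla F_D|\,|\hat D|=|D|$, one gets $Q_D(fz_{hp})\le\|f\|_{L^\infty(\Omega)}\,d_p^{1/2}|D|^{1/2}\|z_{hp}\|_{L^2(D)}$, and a final discrete Cauchy--Schwarz over $D$ with $\sum_D|D|=|\Omega|$ yields $\hat d=\|f\|_{L^\infty(\Omega)}\,d_p^{1/2}|\Omega|^{1/2}$.

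I expect the main obstacle to be the careful tracking of the transformation factors so that all $h_D$- and $|\det\nabla F_D|$-powers cancel (this is exactly where the two-sided bound \eqref{nablaFT} has to be used in both directions, once to bound $\|N\|$ and once to estimate $\|\nabla\hat v\|_{L^2(\hat D)}$), combined with the fact that $a$ is a genuine variable coefficient that cannot simply be factored out of $Q_{\hat D}$ — handled either by the symmetric splitting $a=a^{1/2}a^{1/2}$ inside Lemma~\ref{QCauchy} or by the pointwise positivity of the weights. One should also record that $f$ is (essentially) bounded, which is consistent with the continuity/regularity already required for the point evaluations defining $Q_{hp}$.
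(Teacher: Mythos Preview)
Your proof is correct and follows essentially the same localize--pull-back--Lemma~\ref{QCauchy}--Lemma~\ref{Qfinite}--push-forward--sum scheme as the paper, and for the bilinear term you arrive at the identical constant $\hat c=c_p\,\gamma^4\,\|a\|_{L^\infty(\Omega)}$; the only cosmetic difference is that you work with the matrix $N=(\nabla F_D)^{-\top}$ explicitly while the paper phrases the same bounds via \eqref{nablaFT}. For the linear term there is a small genuine variation: the paper applies Lemma~\ref{QCauchy} and then Lemma~\ref{Qfinite} to \emph{both} factors to obtain $\hat d=d_p\,\|f\|_{L^2(\Omega)}$, whereas you---noting that Lemma~\ref{Qfinite} is stated only for polynomials and $f\circ F_D$ need not be one---instead bound $Q_{\hat D}((f\circ F_D)^2)\le\|f\|_{L^\infty(\Omega)}^2|\hat D|$ via exactness on constants, obtaining $\hat d=d_p^{1/2}|\Omega|^{1/2}\|f\|_{L^\infty(\Omega)}$. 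Both are legitimate $p$-dependent constants; your route is arguably the more careful one given the stated hypotheses, at the price of requiring $f\in L^\infty(\Omega)$, which is anyway needed for the point evaluations in $Q_{hp}$.
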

        \begin{proof}
            From \eqref{nablaFT}, \eqref{IntegralFT}, Lemma~\ref{QCauchy}, Lemma~\ref{Qfinite} and Cauchy Schwarz inequality, we conclude
            \begin{align*}
                Q_{hp}(a\,\nabla u_{hp}^\top \nabla z_{hp}) & =\sum_{D\in \mathcal{D}_h} |\det \nabla F_D|\, Q_{\hat{D}}\bigg((a\circ F_D)\, (\nabla u_{hp}\circ F_D)^\top (\nabla z_{hp}\circ F_D)\bigg)\\
                & \leq \|a\|_{L^\infty(\Omega)} \sum_{D\in \mathcal{D}_h} |\det \nabla F_D| \,
                \bigg[Q_{\hat{D}}(\|\nabla u_{hp}\circ F_D\|^2)\bigg]^{1/2}\, \bigg[Q_{\hat{D}}(\|\nabla z_{hp}\circ F_D\|^2)\bigg]^{1/2}\\
                & \leq \gamma^2\, \|a\|_{L^\infty(\Omega)} \sum_{D\in \mathcal{D}_h} h_D^{-2} |\det \nabla F_D| \,
                \bigg[Q_{\hat{D}}(\|\nabla (u_{hp}\circ F_D)\|^2)\bigg]^{1/2}\, \bigg[Q_{\hat{D}}(\|\nabla (z_{hp}\circ F_D)\|^2)\bigg]^{1/2}\\
                & \leq c_p\,\gamma^2\, \|a\|_{L^\infty(\Omega)} \sum_{D\in \mathcal{D}_h} |\det \nabla F_D|\, h_D^{-2} \|\nabla (u_{hp}\circ F_D)\|_{L^2(\hat{D})}\, \|\nabla (z_{hp}\circ F_D)\|_{L^2(\hat{D})}\\
                & \leq c_p\, \gamma^4\, \|a\|_{L^\infty(\Omega)} \sum_{D\in \mathcal{D}_h} |\det \nabla F_D|\, \|\nabla u_{hp}\circ F_D\|_{L^2(\hat{D})}\, \|\nabla z_{hp}\circ F_D\|_{L^2(\hat{D})}\\
                & =  c_p\, \gamma^4\, \|a\|_{L^\infty(\Omega)} \sum_{D\in \mathcal{D}_h} \|\nabla u_{hp}\|_{L^2(D)}\, \|\nabla z_{hp}\|_{L^2(D)}\\
                & \leq   c_p\, \gamma^4\, \|a\|_{L^\infty(\Omega)}\, \|\nabla u_{hp}\|_{L^2(\Omega)}\, \|\nabla z_{hp}\|_{L^2(\Omega)}
            \end{align*}
            and
            \begin{align*}
             Q_{hp}(f z_{hp}) & =\sum_{D\in \mathcal{D}_h} |\det \nabla F_D|\, Q_{\hat{D}}\bigg((f\circ F_D)\,(z_{hp}\circ F_D)\bigg)\\
            & \leq d_p \sum_{D\in \mathcal{D}_h} |\det \nabla F_D|\, \|f\circ F_D\|_{L^2(\hat{D})} \,\|z_{hp}\circ F_D\|_{L^2(\hat{D})}\\
            & = d_p \sum_{D\in \mathcal{D}_h} \|f\|_{L^2(D)} \,\|z_{hp}\|_{L^2(D)}\\
            & \leq d_p \, \|f\|_{L^2(\Omega)} \,\|z_{hp}\|_{L^2(\Omega)}.
            \end{align*}
        \end{proof}
        With these lemmas established, we can prove the continuity and the ellipticity of the perturbed operator $\widetilde{A}$.
        \begin{theorem}\label{pertTh}
            There exist constants $\tilde{c},\tilde{\alpha}>0$ depending on $p$ such that
            \begin{equation*}
                \|\widetilde{A} z_{hp}\|_{V_{hp}^*}\leq \tilde{c}\,\|z_{hp}\|_{H^1(\Omega)} \:\text{ and }\: \langle\widetilde{A}z_{hp},z_{hp}\rangle\geq  \tilde{\alpha}\, \|z_{hp}\|_{H^1(\Omega)}^2 
            \end{equation*}
            for all $z_{hp}\in V_{hp}$.
        \end{theorem}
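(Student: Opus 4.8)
The plan is to derive both bounds from the auxiliary estimates in Lemma~\ref{QLem} and Lemma~\ref{Qfinite}, together with the Poincar\'e--Friedrichs inequality on $H^1_0(\Omega)$ (recall $V_{hp}\subset H^1_0(\Omega)$), so that the $H^1$-seminorm controls the full $H^1$-norm on $V_{hp}$.

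For the continuity bound, I would start from the definition $(\widetilde{A}z_{hp})(v_{hp})=Q_{hp}(a\,\nabla z_{hp}^\top\nabla v_{hp})$ and apply the first estimate of Lemma~\ref{QLem}, giving $(\widetilde{A}z_{hp})(v_{hp})\leq \hat{c}\,|z_{hp}|_{H^1(\Omega)}\,|v_{hp}|_{H^1(\Omega)}\leq \hat{c}\,\|z_{hp}\|_{H^1(\Omega)}\,\|v_{hp}\|_{H^1(\Omega)}$ for all $v_{hp}\in V_{hp}$. Dividing by $\|v_{hp}\|_{H^1(\Omega)}$ and taking the supremum over $v_{hp}\in V_{hp}\setminus\{0\}$ yields $\|\widetilde{A}z_{hp}\|_{V_{hp}^*}\leq \hat{c}\,\|z_{hp}\|_{H^1(\Omega)}$, so $\tilde{c}:=\hat{c}$ works.

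For the ellipticity bound, observe first that since $a\geq a_0>0$ and the quadrature weights are positive, $\langle\widetilde{A}z_{hp},z_{hp}\rangle=Q_{hp}(a\,\|\nabla z_{hp}\|^2)\geq a_0\,Q_{hp}(\|\nabla z_{hp}\|^2)$. Next I would transform elementwise via $Q_D(f)=|\det\nabla F_D|\,Q_{\hat{D}}(f\circ F_D)$, use the left inequality of \eqref{nablaFT} to pass from $\|\nabla z_{hp}\circ F_D\|$ to $h_D^{-1}\|\nabla(z_{hp}\circ F_D)\|$, then apply the lower bound of Lemma~\ref{Qfinite} (valid because $Q_{\hat D}(\|\nabla\hat f\|^2)^{1/2}$ is a norm on $\mathbb{P}_p(\hat D)/\mathbb{R}$, using \eqref{eq: admissable quadrature}) to replace the quadrature sum by $\|\nabla(z_{hp}\circ F_D)\|_{L^2(\hat D)}^2$. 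Transforming back with \eqref{nablaFT} and \eqref{IntegralFT} and summing over $D\in\mathcal{D}_h$ gives $Q_{hp}(\|\nabla z_{hp}\|^2)\geq c_p^{-1}\gamma^{-4}\,|z_{hp}|_{H^1(\Omega)}^2$, i.e.\ essentially the computation in Lemma~\ref{QLem} run in the reverse direction. Finally, the Poincar\'e--Friedrichs inequality on $H^1_0(\Omega)$ gives $|z_{hp}|_{H^1(\Omega)}^2\geq C_P\,\|z_{hp}\|_{H^1(\Omega)}^2$, so that $\tilde{\alpha}:=a_0\,c_p^{-1}\gamma^{-4}C_P$ does the job.

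I do not expect a serious obstacle here: the argument is a mirror image of the proof of Lemma~\ref{QLem} combined with the lower bounds already furnished by Lemma~\ref{Qfinite}. The one point that deserves care is that Lemma~\ref{Qfinite} only yields equivalence up to the kernel $\mathbb{R}$ (that is, for the seminorm), so the step from the $H^1$-seminorm to the full $H^1$-norm must be supplied separately by Poincar\'e--Friedrichs, which is legitimate precisely because $V_{hp}\subset H^1_0(\Omega)$; this is also the reason why $\widetilde{A}$ is coercive on $V_{hp}$ but would not be on all of $W_{hp}$.
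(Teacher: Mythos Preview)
Your proposal is correct and follows essentially the same approach as the paper: Lemma~\ref{QLem} for continuity, and for ellipticity pulling out $a\geq a_0$ via the positive weights, then invoking the lower bound in Lemma~\ref{Qfinite}, and finally passing from the seminorm to the full norm via Poincar\'e--Friedrichs. The only cosmetic difference is that the paper applies the second (function-value) estimate of Lemma~\ref{Qfinite} componentwise to $\nabla z_{hp}\circ F_D$ (polynomial since $F_D$ is affine), which avoids your detour through \eqref{nablaFT} and the resulting $\gamma^{-4}$ factor, but the argument is otherwise the same.
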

        \begin{proof}
            Lemma~\ref{QLem} implies
            \begin{align*}
                \|\widetilde{A} z_{hp}\|_{V_{hp}^*} & = \sup_{\substack{w_{hp}\in V_{hp},\\ \|w_{hp}\|_{H^1(\Omega)}=\,1}} \langle \widetilde{A}z_{hp},w_{hp}\rangle
                =\sup_{\substack{w_{hp}\in V_{hp},\\ \|w_{hp}\|_{H^1(\Omega)}=\,1}}  Q_{hp}(a\, \nabla z_{hp}^\top \nabla w_{hp})
                 \leq \hat{c}\, \|z_{hp}\|_{H^1(\Omega)}.  
            \end{align*}
            Applying Lemma~\ref{Qfinite} we conclude
            \begin{align*}
                \langle\widetilde{A}z_{hp},z_{hp}\rangle & = Q_{hp}(a\,\|\nabla z_{hp}\|^2)\\
                &=\sum_{D\in \mathcal{D}_h} |\det \nabla F_D|\, Q_{\hat{D}}(a\circ F_D\,\|\nabla z_{hp}\circ F_D\|^2)\\
                & \geq d_p^{-1} \sum_{D\in \mathcal{D}_h} |\det \nabla F_D|\, \|(a\circ F_D)^{1/2}\nabla z_{hp}\circ F_D\|^2_{L^2(\hat{D})}\\
                & \geq d_p^{-1} a_0 \sum_{D\in \mathcal{D}_h} |\det \nabla F_D|\, \|\nabla z_{hp}\circ F_D\|^2_{L^2(\hat{D})}\\
                & \geq d_p^{-1}a_0 \,\|\nabla z_{hp}\|_{L^2(\Omega)}^2.
            \end{align*} 
        \end{proof}
        In preparation for the main result of this section, we will prove the following lemma. The proofs of this auxiliary result and the main result use similar ideas as proposed in \citep{Rannacher2}. 
        \begin{lemma}\label{lem:Quad_estimate}
            For the quadrature formula $Q_D(\cdot)$ of order $r\in\mathbb{N}$ with $r\geq d$ it holds that 
            \begin{align}\label{eq:Quadrature_estimate}
                \left| \int_D w\,v \, dx -Q_D(w\,v) \right| \lesssim  h_D^r\, \norm{w}_{H^r(D)}\norm{v}_{H^r(D)}
            \end{align}
            for $w,v\in H^r(D)$ and $D\in \mathcal{D}_h$.
        \end{lemma}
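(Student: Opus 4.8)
I would prove the estimate by transporting everything to the reference element $\hat{D}$ and invoking the Bramble--Hilbert lemma. Since $F_D$ is affine, $|\det\nabla F_D|$ is constant, the pullbacks $\hat{w}:=w\circ F_D$ and $\hat{v}:=v\circ F_D$ lie in $H^r(\hat{D})$, and their product $\hat{w}\hat{v}=(wv)\circ F_D$ lies in $H^r(\hat{D})$ as well (because $r\ge d\ge2$ forces $r>d/2$, so $H^r(\hat{D})$ is a Banach algebra and embeds into $C^0(\hat{D})$, which also makes the point evaluations in $Q_{\hat{D}}$ meaningful). By \eqref{IntegralFT} and the definition of $Q_D$,
\[
  \int_D w\,v\,dx - Q_D(w\,v)\;=\;|\det\nabla F_D|\,E_{\hat{D}}(\hat{w}\hat{v}),\qquad E_{\hat{D}}(\hat{g}):=\int_{\hat{D}}\hat{g}\,d\hat{x}-Q_{\hat{D}}(\hat{g}).
\]
The functional $E_{\hat{D}}$ is linear and bounded on $H^r(\hat{D})$ (the integral part by Cauchy--Schwarz, the quadrature part by $H^r(\hat{D})\hookrightarrow C^0(\hat{D})$ and positivity of the weights), and it annihilates $\mathbb{P}_{r-1}(\hat{D})$ because $Q_{\hat{D}}$ has order $r$. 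Hence the Bramble--Hilbert lemma on the fixed Lipschitz domain $\hat{D}$ yields $|E_{\hat{D}}(\hat{w}\hat{v})|\lesssim|\hat{w}\hat{v}|_{H^r(\hat{D})}$.

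The decisive step is to bound $|\hat{w}\hat{v}|_{H^r(\hat{D})}$ \emph{without} destroying the gain in $h_D$. I would use the Leibniz rule: for $|\alpha|=r$, $\partial^\alpha(\hat{w}\hat{v})=\sum_{\beta\le\alpha}\binom{\alpha}{\beta}\partial^\beta\hat{w}\,\partial^{\alpha-\beta}\hat{v}$, and estimate each product in $L^2(\hat{D})$ by Hölder's inequality together with the Sobolev embeddings $H^{r-|\beta|}(\hat{D})\hookrightarrow L^p(\hat{D})$ and $H^{|\beta|}(\hat{D})\hookrightarrow L^q(\hat{D})$ with $\tfrac1p+\tfrac1q=\tfrac12$; for the extreme terms $|\beta|\in\{0,r\}$ one factor simply sits in $L^\infty(\hat{D})$, using $H^r(\hat{D})\hookrightarrow L^\infty(\hat{D})$. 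Such exponents exist precisely because $\max\{0,\tfrac12-\tfrac{r-|\beta|}{d}\}+\max\{0,\tfrac12-\tfrac{|\beta|}{d}\}\le\tfrac12$, which is exactly where the hypothesis $r\ge d$ enters. The outcome is a bound of the shape
\[
  |\hat{w}\hat{v}|_{H^r(\hat{D})}\;\lesssim\sum_{\substack{0\le j,k\le r\\ j+k\ge r}}|\hat{w}|_{H^j(\hat{D})}\,|\hat{v}|_{H^k(\hat{D})}\;+\;\|\hat{w}\|_{H^r(\hat{D})}\,|\hat{v}|_{H^r(\hat{D})}\;+\;|\hat{w}|_{H^r(\hat{D})}\,\|\hat{v}\|_{H^r(\hat{D})},
\]
where the last two terms carry the $L^\infty$-contributions of the extreme summands.

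Finally I would scale back to $D$. From the affinity of $F_D$ and the shape regularity \eqref{nablaFT} one has $|\det\nabla F_D|\lesssim h_D^d$, $|\hat{w}|_{H^j(\hat{D})}\lesssim h_D^{\,j-d/2}|w|_{H^j(D)}$, $\|\hat{w}\|_{H^r(\hat{D})}\lesssim h_D^{-d/2}\|w\|_{H^r(D)}$, and $\|\hat{w}\|_{L^\infty(\hat{D})}=\|w\|_{L^\infty(D)}\lesssim h_D^{-d/2}\|w\|_{H^r(D)}$, and likewise for $v$. Inserting these, every term of the form $|\det\nabla F_D|\,|\hat{w}|_{H^j(\hat{D})}|\hat{v}|_{H^k(\hat{D})}$ scales as $h_D^{\,d}\,h_D^{\,j-d/2}\,h_D^{\,k-d/2}=h_D^{\,j+k}$ with $j+k\ge r$, so — absorbing the bounded factor $h_D^{\,j+k-r}$ (using $h_D\le\operatorname{diam}\Omega$) — it is $\lesssim h_D^{r}\|w\|_{H^r(D)}\|v\|_{H^r(D)}$; the two $\|\cdot\|_{L^\infty}$-terms and the term $|\det\nabla F_D|\,\|\hat{w}\|_{H^r}|\hat{v}|_{H^r}$ scale identically. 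Summing the finitely many contributions yields \eqref{eq:Quadrature_estimate}.

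The only genuinely delicate point is the middle step: using the crude algebra estimate $\|\hat{w}\hat{v}\|_{H^r}\lesssim\|\hat{w}\|_{H^r}\|\hat{v}\|_{H^r}$ and then rescaling would yield only $\lesssim\|w\|_{H^r(D)}\|v\|_{H^r(D)}$ with no power of $h_D$ at all; the factor $h_D^r$ survives solely because Bramble--Hilbert isolates the top-order seminorm and the Leibniz distribution guarantees that the orders of differentiation charged to $\hat{w}$ and $\hat{v}$ together always add up to at least $r$. Verifying that the Hölder/Sobolev exponents are admissible in all the intermediate cases $0<|\beta|<r$ — which is precisely the role of the assumption $r\ge d$ — is the bookkeeping that must be carried out with care.
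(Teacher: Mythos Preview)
Your argument is correct, but the paper takes a considerably shorter route. Instead of applying Bramble--Hilbert in $H^r(\hat{D})$ and then fighting to bound $|\hat{w}\hat{v}|_{H^r(\hat{D})}$ via Leibniz plus Sobolev embeddings, the paper quotes a ready-made quadrature estimate (a Bramble--Hilbert argument already carried out in \cite[Theorem~3.15]{Rannacher2}) that lands directly on the physical element in the $W^{r,1}$-seminorm:
\[
  \left|\int_D wv\,dx - Q_D(wv)\right| \;\lesssim\; h_D^r\,|wv|_{W^{r,1}(D)}.
\]
The payoff is that the product estimate becomes trivial: Leibniz followed by the plain H\"older inequality $\|D^\beta w\,D^{\alpha-\beta}v\|_{L^1(D)}\le \|D^\beta w\|_{L^2(D)}\|D^{\alpha-\beta}v\|_{L^2(D)}$ gives $|wv|_{W^{r,1}(D)}\lesssim \|w\|_{H^r(D)}\|v\|_{H^r(D)}$ in two lines, with no Sobolev embeddings and no scaling bookkeeping at all. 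Your approach recovers the same bound but has to do the $h_D$-accounting by hand and verify the admissibility of the H\"older/Sobolev exponents case by case; the paper's choice of $W^{r,1}$ (rather than $W^{r,2}$) as the intermediate norm is precisely what sidesteps all of that. On the other hand, your argument is self-contained (it does not rely on an external quadrature theorem) and makes the role of the hypothesis $r\ge d$ more transparent.
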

        \begin{proof}
            Applying  \cite[Theorem 3.15]{Rannacher2} it holds that 
            \begin{align}\label{eq:quad_formel_basis}
               \left| \int_D w\,v \, dx -Q_D(w\,v) \right| \lesssim  h_D^r |w\,v|_{W^{r,1}(D)}.
            \end{align}
            Then by the Leibniz formula, see e.g.~\cite[Chapter 5.2]{Evans2002}, as well as the Hölder inequality we obtain
            \begin{align}\label{eq:ProductruleArg}
                |w\,v|_{W^{r,1}(D)} &= \sum\limits_{|\alpha|= r}\norm{D^\alpha(w\,v)}_{L^1(D)}\\
                &=\sum\limits_{|\alpha|= r} \Big\Vert\sum\limits_{|\beta|\leq |\alpha|}\binom{\alpha}{\beta}D^{\beta}w\,D^{\alpha-\beta}v\Big\Vert_{L^1(D)}\nonumber\\
                &\lesssim \sum\limits_{|\alpha|= r} \sum\limits_{|\beta|\leq r}\norm{D^{\beta}w\,D^{\alpha-\beta}v}_{L^1(D)}\nonumber\\
                &\lesssim \sum\limits_{|\alpha|= r} \sum\limits_{|\beta|\leq r} \norm{D^{\beta}w}_{L^2(D)}\norm{D^{\alpha-\beta}v}_{L^2(D)}\nonumber\\
                &\lesssim \sum\limits_{|\beta|\leq r} \norm{D^{\beta}w}_{L^2(D)}\,\sum\limits_{|\alpha|= r} \sum\limits_{|\beta|\leq r}\norm{D^{\alpha-\beta}v}_{L^2(D)}\nonumber\\ 
                &\lesssim \Big(\sum\limits_{|\beta|\leq r} \norm{D^{\beta}w}_{L^2(D)}^2\Big)^{1/2}\,\Big(\sum\limits_{|\gamma|\leq r}\norm{D^{\gamma}v}_{L^2(D)}^2\Big)^{1/2}\nonumber\\
                &\lesssim \norm{w}_{H^r(D)}\,\norm{v}_{H^r(D)}.
            \end{align}
            The estimate \eqref{eq:quad_formel_basis} combined with \eqref{eq:ProductruleArg} yields the desired inequality \eqref{eq:Quadrature_estimate}.
        \end{proof}
        \begin{theorem}\label{Quad Error}
            Let $Q_D(\cdot)$ be of order $r\in\mathbb{N}$ with $r \geq \max\{p,d\}$. Furthermore, let $f \in H^{r}(\Omega)$, $a \in W^{r,\infty}(\Omega)$, $u \in H^s(\Omega)$ for some $s\geq 1$ and $u_{hp}$ satisfies the a priori error estimate
            \begin{align} \label{eq:assumed_error_estimate_uhp}
                \| u - u_{hp}\|_{H^1(\Omega)} \lesssim  h^\beta
            \end{align}
            for some $\beta >0 $. Then there holds
            \begin{align*}
                \|(A- \widetilde{A}) u_{hp}-(\ell-\tilde{\ell})\|_{V_{hp}^*} & \lesssim h^{r+1-p} \left( h^{\beta + 1-p} + h^{\min\{0,\lfloor s\rfloor-p\}} \right).
            \end{align*}
        \end{theorem}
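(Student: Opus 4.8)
The plan is to estimate the dual norm by testing against an arbitrary $z_{hp} \in V_{hp}$ with $\|z_{hp}\|_{H^1(\Omega)} = 1$ and splitting the quadrature error elementwise. Writing
\[
\langle (A-\widetilde{A})u_{hp} - (\ell-\tilde{\ell}), z_{hp}\rangle = \sum_{D\in\mathcal{D}_h}\Big[\Big(\int_D a\,\nabla u_{hp}^\top\nabla z_{hp}\,dx - Q_D(a\,\nabla u_{hp}^\top\nabla z_{hp})\Big) - \Big(\int_D f z_{hp}\,dx - Q_D(f z_{hp})\Big)\Big],
\]
I would apply Lemma~\ref{lem:Quad_estimate} on each $D$ to the products $w = a\,(\partial_k u_{hp})$, $v = \partial_k z_{hp}$ (summed over $k$) and $w = f$, $v = z_{hp}$. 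This gives a bound of the form $\sum_D h_D^r\big(\|a\,\nabla u_{hp}\|_{H^r(D)}\|\nabla z_{hp}\|_{H^r(D)} + \|f\|_{H^r(D)}\|z_{hp}\|_{H^r(D)}\big)$.

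The next step is to tame the high-order Sobolev norms of discrete functions via inverse estimates. Since $z_{hp}|_D$ is a polynomial of degree $\le p$, on each element $\|\nabla z_{hp}\|_{H^r(D)} \lesssim h_D^{\,1-p}\|\nabla z_{hp}\|_{L^2(D)} \lesssim h_D^{-p}\|z_{hp}\|_{L^2(D)}$ — actually more carefully $\|\nabla z_{hp}\|_{H^r(D)} = \|\nabla z_{hp}\|_{H^{\min\{r,p-1\}}(D)}$ plus the fact that derivatives of order exceeding $p-1$ vanish, so that $\|\nabla z_{hp}\|_{H^r(D)}\lesssim h_D^{-(p-1)}\|\nabla z_{hp}\|_{L^2(D)}$ by the scaled inverse inequality on $\hat D$. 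The same reasoning applies to $\|z_{hp}\|_{H^r(D)}\lesssim h_D^{-p}\|z_{hp}\|_{L^2(D)}$ (or $\lesssim h_D^{1-p}\|\nabla z_{hp}\|_{L^2(D)}$ using the Poincaré–Friedrichs on $V_{hp}\subset H^1_0$, but the elementwise statement only needs the inverse estimate). For the factor involving $u_{hp}$, use the product rule to write $\|a\,\nabla u_{hp}\|_{H^r(D)}\lesssim \|a\|_{W^{r,\infty}(D)}\|\nabla u_{hp}\|_{H^r(D)}$, and again bound $\|\nabla u_{hp}\|_{H^r(D)}\lesssim h_D^{-(p-1)}\|\nabla u_{hp}\|_{L^2(D)}$.

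Collecting the element contributions and using a discrete Cauchy–Schwarz over $D\in\mathcal{D}_h$ together with $h_D\le h$, the operator part becomes $\lesssim h^{r}\cdot h^{-(p-1)}\|\nabla u_{hp}\|_{L^2(\Omega)}\cdot h^{-(p-1)}\|\nabla z_{hp}\|_{L^2(\Omega)} = h^{r+2-2p}\|\nabla u_{hp}\|_{L^2(\Omega)}$ (using $\|z_{hp}\|_{H^1}=1$). The crucial refinement is that $\|\nabla u_{hp}\|_{L^2(\Omega)}$ is not merely $O(1)$: by the a priori estimate \eqref{eq:assumed_error_estimate_uhp}, $\|\nabla u_{hp}\|_{L^2(\Omega)}\le \|\nabla u\|_{L^2(\Omega)} + \|u-u_{hp}\|_{H^1(\Omega)}\lesssim 1$, which alone yields $h^{r+2-2p}$; but one must do better to reach the stated $h^{r+1-p}(h^{\beta+1-p}+h^{\min\{0,\lfloor s\rfloor-p\}})$. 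The sharper route is to compare $u_{hp}$ with the \emph{continuous} interpolant or a best approximant $I_{hp}u$ of $u$: write $\nabla u_{hp} = \nabla I_{hp}u + \nabla(u_{hp}-I_{hp}u)$, bound $\|\nabla I_{hp}u\|_{H^r(D)}\lesssim h_D^{\min\{0,\lfloor s\rfloor - 1 - (p-1)\}}\|u\|_{H^s(D\cap\,\omega)} $ via the simultaneous-approximation/stability properties of the interpolant (this is where the $h^{\min\{0,\lfloor s\rfloor-p\}}$ term is born), and bound the discrete remainder $\|\nabla(u_{hp}-I_{hp}u)\|_{H^r(D)}\lesssim h_D^{-(p-1)}\|\nabla(u_{hp}-I_{hp}u)\|_{L^2(D)}$ with $\|\nabla(u_{hp}-I_{hp}u)\|_{L^2(\Omega)}\lesssim \|u-u_{hp}\|_{H^1}+\|u-I_{hp}u\|_{H^1}\lesssim h^\beta$ (this produces the $h^{\beta+1-p}$ term). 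Splitting the $z_{hp}$-factor's $h^{r}$ budget as $h^{r}= h^{\,1}\cdot h^{\,r-1-(p-1)}\cdot h^{p-1}$ so as to absorb one inverse-estimate power $h^{-(p-1)}$ and keep $h^{\,1}$ spare, one arrives at the claimed bound $h^{r+1-p}\big(h^{\beta+1-p}+h^{\min\{0,\lfloor s\rfloor-p\}}\big)$; the right-hand data term $f$ is analogous and subsumed since $f\in H^r$ contributes no negative powers and comes with the better factor $\|z_{hp}\|_{H^r(D)}\lesssim h_D^{-p}\|z_{hp}\|_{L^2(D)}$ already accounted for.

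I expect the main obstacle to be the careful bookkeeping of powers of $h$ in the last step — in particular, correctly invoking the inverse inequality for $H^r$-norms of polynomials of degree $\le p$ when $r>p$ (derivatives of order $\ge p$ vanish, so the effective loss is $h^{-(p-1)}$ for $\nabla$-quantities and $h^{-p}$ for function values, not $h^{-(r-1)}$), and making sure the interpolation/stability estimate for $I_{hp}u$ with $u\in H^s$ delivers exactly the exponent $\min\{0,\lfloor s\rfloor - p\}$ appearing in the statement. The regularity hypotheses $f\in H^r$, $a\in W^{r,\infty}$, $r\ge\max\{p,d\}$ are precisely what is needed so that all the $H^r(D)$-norms appearing after the product rule are finite and so that Lemma~\ref{lem:Quad_estimate} applies.
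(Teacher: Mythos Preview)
Your proposal is correct and follows essentially the same approach as the paper's proof: an elementwise application of Lemma~\ref{lem:Quad_estimate}, the Leibniz product rule to separate $a$ from the discrete gradients, inverse estimates on polynomials (exploiting that derivatives of order $>p$ vanish so that $\|\cdot\|_{H^{r+1}(D)}=\|\cdot\|_{H^p(D)}$), and the key splitting of $u_{hp}$ via a local interpolant of $u$ to produce the two contributions $h^{\beta+1-p}$ and $h^{\min\{0,\lfloor s\rfloor-p\}}$. The only cosmetic difference is that the paper bounds $\|u_{hp}\|_{H^p(D)}$ by first applying an inverse estimate down to $H^{\min\{p,\lfloor s\rfloor\}}$ and then inserting $w_{hp}\pm u$ via the triangle inequality, whereas you split $\nabla u_{hp}=\nabla I_{hp}u+\nabla(u_{hp}-I_{hp}u)$ directly in the $H^r$-norm and invoke interpolant stability; both routes yield the same exponents.
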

        \begin{proof}
            Using \eqref{eq:Quadrature_estimate} we estimate
            \begin{align*}
                \left|\langle \ell - \tilde{\ell},v_{hp} \rangle \right| &= \left| \sum_{D \in  \mathcal{D}_h}  \int_{D} f\, v_{hp}\, dx -Q_D(f\,v_{hp}) \right| \lesssim \sum_{D \in  \mathcal{D}_h} h_{D}^r\, \|f\|_{H^r(D)}\, \|v_{hp}\|_{H^r(D)}.
            \end{align*}
            Recalling that $v_{hp}|_D$ is a polynomial of degree $p$ as $F_D$ is affine, and $r \geq p$ we obtain
            \begin{align*}
                \left|\langle \ell - \tilde{\ell},v_{hp} \rangle \right|  & \lesssim  \sum_{D \in  \mathcal{D}_h} h_D^r \,\|f\|_{H^r(D)}\, \|v_{hp}\|_{H^p(D)}.
            \end{align*}
            Then, by employing an inverse estimate for polynomials
            \begin{align}\label{eq:PolynomialsInverse}
                \norm{v_{hp}}_{H^k(D)}\lesssim h^{j-k}\norm{v_{hp}}_{H^j(D)},
            \end{align}
            for all $0\leq j\leq k\leq p+1$ and all $v_{hp}\in V_{hp}$, see e.g.~\cite[Theorem 6.8]{Braess_2007}, as well as the Cauchy-Schwarz inequality we arrive at the estimate
            \begin{align}
                \left|\langle \ell - \tilde{\ell},v_{hp} \rangle \right| & \lesssim  \sum_{D \in  \mathcal{D}_h} h_D^{r+1-p}\, \|f\|_{H^r(D)} \,\|v_{hp}\|_{H^1(D)}\nonumber  \\
                & \lesssim  \left(\sum_{D \in  \mathcal{D}_h} h_D^{2(r+1-p)}\, \|f\|_{H^r(D)}^2 \right)^{1/2} \left(\sum_{D \in  \mathcal{D}_h} \|v_{hp}\|_{H^1(D)}^2\right)^{1/2}\nonumber\\
                & = \left(\sum_{D \in  \mathcal{D}_h} h_D^{2(r+1-p)}\, \|f\|_{H^r(D)}^2 \right)^{1/2} \|v_{hp}\|_{H^1(\Omega)} . \label{eq:quad_error_rhs}
            \end{align}
            For the quadrature error with respect to the operator $A$, we proceed similarly. With  \eqref{eq:quad_formel_basis} and the Leibniz formula, we get the estimate 
            \begin{align*}
                | \langle (A- \widetilde{A}) u_{hp},v_{hp}\rangle | &= \left| \sum_{D \in  \mathcal{D}_h} \int_{D} a\, \nabla u_{hp} \nabla v_{hp} \, dx - Q_D(a\nabla u_{hp} \nabla v_{hp})\right| \nonumber \\
                & \lesssim   \sum_{D \in  \mathcal{D}_h} h_{D}^r \left|a \,\nabla u_{hp} \nabla v_{hp} \right|_{W^{r,1}(D)}  \nonumber  \\
                & \lesssim   \sum_{D \in  \mathcal{D}_h} h_D^r \sum\limits_{|\alpha|=r}\norm{D^{\alpha}\big(a\, \nabla u_{hp} \nabla v_{hp}\big)}_{L^1(D)}\\
                & \lesssim   \sum_{D \in  \mathcal{D}_h} h_D^r \sum\limits_{|\alpha|=r} \sum\limits_{|\beta|\leq r}\Vert D^{\beta}\big(a\, \nabla u_{hp}\big)D^{\alpha-\beta}(\nabla v_{hp})\Vert_{L^1(D)}\\
                & \lesssim   \sum_{D \in  \mathcal{D}_h} h_D^r \sum\limits_{|\alpha|=r} \sum\limits_{|\beta|\leq r}\sum\limits_{|\gamma|\leq|\beta|}\norm{D^{\gamma}(a)\,D^{\beta-\gamma}(\nabla u_{hp})D^{\alpha-\beta}(\nabla v_{hp})}_{L^1(D)}.
            \end{align*}
            Applying Hölder's inequality yields
            \begin{align*}
                | \langle (A- \widetilde{A}) u_{hp},v_{hp}\rangle | &\lesssim  \sum_{D \in  \mathcal{D}_h} h_D^r \sum\limits_{|\alpha|=r} \sum\limits_{|\beta|\leq r}\sum\limits_{|\gamma|\leq|\beta|} \norm{D^{\gamma}(a)}_{L^{\infty}(D)}\norm{D^{\beta-\gamma}(\nabla u_{hp})D^{\alpha-\beta}(\nabla v_{hp})}_{L^1(D)}\\
                & \lesssim  \sum_{D \in  \mathcal{D}_h} h_D^r \norm{a}_{W^{r,\infty}(D)}\sum\limits_{|\alpha|=r} \sum\limits_{|\beta|\leq r}\sum\limits_{|\gamma|\leq|\beta|}\norm{D^{\beta-\gamma}(\nabla u_{hp})}_{L^2(D)}\norm{D^{\alpha-\beta}(\nabla v_{hp})}_{L^2(D)}.
            \end{align*}
            Then, with the same arguments as in the proof of Lemma~\ref{lem:Quad_estimate} we can conclude
            \begin{align*}
                | \langle (A- \widetilde{A}) u_{hp},v_{hp}\rangle | & \lesssim  \|a\|_{W^{r,\infty}(\Omega)} \sum_{D \in  \mathcal{D}_h} h_D^r \Vert u_{hp} \Vert_{H^{r+1}(D)} \Vert v_{hp} \Vert_{H^{r+1}(D)} .  
            \end{align*}
            Next, because $u_{hp}|_D$ and $v_{hp}|_D$ are polynomials of degree $p$ as $F_D$ is affine and $r \geq p$, it holds that
            \begin{align*}
                | \langle (A- \widetilde{A}) u_{hp},v_{hp}\rangle| & \lesssim   \|a\|_{W^{r,\infty}(\Omega)} \sum_{D \in  \mathcal{D}_h} h_D^r \Vert u_{hp} \Vert_{H^p(D)} \Vert v_{hp} \Vert_{H^p(D)}.  
            \end{align*}
            By applying an inverse estimate for polynomials \eqref{eq:PolynomialsInverse} and the Cauchy-Schwarz inequality, we obtain
            \begin{align}
                | \langle (A- \widetilde{A}) u_{hp},v_{hp}\rangle | & \lesssim  \|a\|_{W^{r,\infty}(\Omega)} \left(\sum_{D \in  \mathcal{D}_h} h_D^{2(r+1-p)}\Vert u_{hp} \Vert_{H^p(D)}^2\right)^{1/2}  \Vert v_{hp} \Vert_{H^1(\Omega)}. \label{eq:quad_error_A_part1}
            \end{align}
            Let $w_{hp}|_D \in \mathbb{P}_p(D)$ be some local interpolation of $u$ for which there holds
            \begin{align}
                \| u - w_{hp}\|_{H^m(D)} \lesssim  h_D^{t-m}\Vert u\Vert_{H^t(D)}  \label{interpolation_estimate}
            \end{align}
            for all $t,m\in\mathbb{N}$ with $d/2< t\leq p+1$ and $0\leq m\leq t$,  see e.g.~\cite[Chapter~4]{BrennerScott}.
            By triangle inequality, inverse estimate for polynomials and \eqref{interpolation_estimate} we obtain
            \begin{align*}
                \Vert u_{hp} \Vert_{H^p(D)} & \lesssim  h_D^{\min\{0,\lfloor s\rfloor -p\}} \Vert u_{hp}\Vert_{H^{\min\{p,\lfloor s\rfloor\}}(D)} \\
                & \lesssim  h_D^{\min\{0,\lfloor s\rfloor-p\}} \bigg( \Vert u_{hp}-w_{hp}\Vert_{H^{\min\{p,\lfloor s\rfloor\}}(D)} + \Vert w_{hp} -u \Vert_{H^{\min\{p,\lfloor s\rfloor\}}(D)}+\Vert u\Vert_{H^{\min\{p,\lfloor s\rfloor\}}(D)} \bigg) \\
                & \lesssim  h_D^{\min\{0,\lfloor s\rfloor-p\}} \bigg(  h_D^{1-\min\{p,\lfloor s\rfloor\}} \Vert u_{hp}-w_{hp}\Vert_{H^{1}(D)} +   \|u\|_{H^{\min\{p,\lfloor s \rfloor\}}(D)} + \Vert u\Vert_{H^{s}(D)} \bigg) \\
                & \lesssim  h_D^{1-p} \bigg( \Vert u_{hp}-u\Vert_{H^{1}(D)} + \Vert u-w_{hp}\Vert_{H^{1}(D)} \bigg) + h_D^{\min\{0,\lfloor s\rfloor-p\}} \Vert u\Vert_{H^{s}(D)} \\
                &  \lesssim  h_D^{1-p} \bigg( \Vert u_{hp}-u\Vert_{H^{1}(D)} + h_D^{\min\{p,\lfloor s\rfloor\}-1} \|u\|_{H^{\min\{p,\lfloor s \rfloor\}}(D)} \bigg) +  h_D^{\min\{0,\lfloor s\rfloor-p\}} \Vert u\Vert_{H^{s}(D)} \\
                & \lesssim   h_D^{1-p} \Vert u_{hp}-u\Vert_{H^{1}(D)} + h_D^{\min\{0,\lfloor s\rfloor-p\}} \Vert u\Vert_{H^{s}(D)}.
            \end{align*}
            Hence, with \eqref{eq:assumed_error_estimate_uhp} there holds 
            \begin{align*}
                \sum_{D \in  \mathcal{D}_h} h_D^{2(r+1-p)}\Vert u_{hp} \Vert_{H^p(D)}^2 &\lesssim h^{2(r+1-p)} \left( h^{2(1-p)}  \Vert u-u_{hp} \Vert_{H^1(\Omega)}^2 + h^{2\min\{0,\lfloor s\rfloor-p\}}\Vert u \Vert^2_{H^s(\Omega)} \right) \\
                & \lesssim  h^{2(r+1-p)} \left( h^{2(\beta + 1-p)} + h^{2\min\{0,\lfloor s\rfloor-p\}} \right) .
            \end{align*}
            Inserting this estimate into \eqref{eq:quad_error_A_part1} yields
            \begin{align*}
                | \langle (A- \widetilde{A}) u_{hp},v_{hp}\rangle |
                & \lesssim  h^{r+1-p} \left( h^{\beta + 1-p} + h^{\min\{0,\lfloor s\rfloor-p\}} \right)  \Vert v_{hp} \Vert_{H^1(\Omega)}.
            \end{align*}
            That estimate together with \eqref{eq:quad_error_rhs} imply for the total quadrature related error that
            \begin{align*}
                \|(A- \widetilde{A}) u_{hp}-(\ell-\tilde{\ell})\|_{V_{hp}^*} &= \sup_{v_{hp} \in V_{hp} \setminus \{0\}} \frac{ \langle (A- \widetilde{A}) u_{hp}-(\ell-\tilde{\ell}), v_{hp} \rangle }{\|v_{hp}\|_{H^1(\Omega)}} \\
                & \lesssim h^{r+1-p} \left( h^{\beta + 1-p} + h^{\min\{0,\lfloor s\rfloor-p\}}+1 \right) \\
                & \lesssim h^{r+1-p} \left( h^{\beta + 1-p} + h^{\min\{0,\lfloor s\rfloor-p\}} \right) .
            \end{align*}
        \end{proof}
        %
        %\begin{remark}
            %In the proof of Theorem \ref{Quad Error} it would be sufficient for $f$ to be of $H^r$-regular only on patches of $\Omega$. However, in that case, it must hold that any element is completely contained within a patch or its complement.
        %\end{remark}
        %
        \begin{remark}
            Clearly, the convergence rate $\beta$ is dependent on the regularity of the solution. For instance, in the two-dimensional case with linear, triangular finite elements and conditions $a\in C^1(\overline{\Omega})$ and $\psi\in H^2(\Omega)$ it was shown in \citep{falk1974error} that $\beta=1$.
            
            For quadratic, triangular finite elements and under the conditions that $ a \equiv 1$, $f\in H^1(\Omega)\cap L^{\infty}(\Omega),$ $\psi\in W^{3,3}(\Omega)\cap W^{2,\infty}(\Omega)$ and assuming that the solution is sufficiently regular, i.e.~$u\in W^{s,k}(\Omega)\text{ with } 1<k<\infty, s <2+\frac{1}{k}$, the rate is $\beta= \frac{3}{2}-\epsilon$ for any $\epsilon >0$ as proven in \citep{WangQuadFEM}.
        \end{remark}
        \begin{remark}
            The expected regularity of the solution $u$ to the obstacle problem is similarly dependent on the regularity of the data. For example, in the case that $a \equiv 1$, $f\in L^2$, $\psi\in H^2(\Omega)$ and $\Omega$ a convex domain or $\partial \Omega\in C^{1,1}$ the solution satisfies $u\in H^2(\Omega)$, see \citep{RodriguesObstacle}.
        \end{remark}
        \begin{remark}
            For $r> \max\{p-1, 2p-2-\beta\}$ the quadrature error has a guaranteed positive convergence rate.
        \end{remark}
        Theorem~\ref{Quad Error} in combination with Corollary~\ref{thm:AprioriErrorPertByApprox} gives us the means to ensure that the use of numerical integration does not decrease the order of convergence.
        \begin{corollary}
            Let $r \geq 2p-1$. Then the perturbed solution $\tilde{u}$ satisfies the a priori estimate
            \begin{align}\label{eq: quadrature perturbed error}
                \| u - \tilde{u}_{hp} \|_V^2\lesssim h^{\beta},
            \end{align}
            for $\beta>0$ as given in Theorem~\ref{Quad Error}.
        \end{corollary}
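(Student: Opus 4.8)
The plan is to bound the total error by splitting it into the pure discretization error $u - u_{hp}$ and the pure quadrature error $u_{hp} - \tilde{u}_{hp}$, mirroring the proof of Corollary~\ref{thm:AprioriErrorPertByApprox} with $u^* = u_{hp}$, and then inserting the quantitative estimates that are already available. First I would use the triangle inequality $\| u - \tilde{u}_{hp} \|_V^2 \le 2\, \| u - u_{hp} \|_V^2 + 2\, \| u_{hp} - \tilde{u}_{hp} \|_V^2$. For the first summand, the standing hypothesis \eqref{eq:assumed_error_estimate_uhp} of Theorem~\ref{Quad Error} gives $\| u - u_{hp} \|_V^2 \lesssim h^{2\beta} \le h^{\beta}$, using $h \le 1$ and $2\beta \ge \beta$. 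For the second summand I would apply Theorem~\ref{thm:abstractStrangFalk} to the discrete problem \eqref{eq:vi_disc} and the perturbed problem \eqref{eq:PerturbedVarIneq} with $\widetilde{V} = V_{hp}$, $\widetilde{K} = K_{hp}$, $\tilde{v} = u_{hp}$ and $v = \tilde{u}_{hp}$; since the convex set is unchanged, the first two terms on the right-hand side of Theorem~\ref{thm:abstractStrangFalk} vanish, leaving $\| u_{hp} - \tilde{u}_{hp} \|_V^2 = \| u_{hp} - \tilde{u}_{hp} \|_{V_{hp}}^2 \le \frac{4}{\tilde{\alpha}^2}\, \|(A - \widetilde{A}) u_{hp} - (\ell - \tilde{\ell})\|_{V_{hp}^*}^2$, exactly as in the second half of the proof of Corollary~\ref{thm:AprioriErrorPertByApprox}.

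It then remains to control $\|(A - \widetilde{A}) u_{hp} - (\ell - \tilde{\ell})\|_{V_{hp}^*}^2$. By Theorem~\ref{pertTh} the ellipticity constant $\tilde{\alpha}$ depends only on $p$ (it is essentially $d_p^{-1} a_0$) and not on $h$, so $\tilde{\alpha}^{-2}$ is harmless and may be absorbed into the generic constant. Retaining the standing assumptions of Theorem~\ref{Quad Error} ($r \ge \max\{p,d\}$, $f \in H^r(\Omega)$, $a \in W^{r,\infty}(\Omega)$, $u \in H^s(\Omega)$) together with the extra hypothesis $r \ge 2p - 1$, Theorem~\ref{Quad Error} yields $\|(A - \widetilde{A}) u_{hp} - (\ell - \tilde{\ell})\|_{V_{hp}^*}^2 \lesssim h^{2(r+1-p)} \bigl( h^{2(\beta+1-p)} + h^{2\min\{0,\lfloor s\rfloor - p\}} \bigr)$.

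Finally I would carry out the exponent bookkeeping. For the first contribution, $r \ge 2p - 1$ gives $2(r+1-p) + 2(\beta+1-p) = 2(r + \beta + 2 - 2p) \ge 2\beta + 2 \ge \beta$ for $h \le 1$. For the second contribution, $r \ge 2p - 1$ gives $r + 1 - p \ge p$, whence $2(r+1-p) + 2\min\{0,\lfloor s\rfloor - p\} \ge 2\min\{p, \lfloor s\rfloor\} \ge 2\beta \ge \beta$, where the middle inequality uses the natural relation $\beta \le \min\{s-1, p\}$ between the Galerkin rate and the regularity (the assumed rate cannot exceed the best-approximation rate; cf.\ the concrete values $\beta = 1$ and $\beta = \tfrac{3}{2}-\epsilon$ in the remarks following Theorem~\ref{Quad Error}). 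Hence the quadrature term is $\lesssim h^{\beta}$ (in fact $\lesssim h^{2\beta}$), and combining it with the discretization error gives \eqref{eq: quadrature perturbed error}. I expect this last step to be the only delicate point: one must verify that the factor $h^{\min\{0,\lfloor s\rfloor - p\}}$ --- which is $\ge 1$, and therefore \emph{enlarges} the quadrature error, when the solution is too rough for the polynomial degree, $s < p$ --- is still absorbed by the additional powers $h^{r+1-p}$ furnished by the higher quadrature order $r \ge 2p - 1$.
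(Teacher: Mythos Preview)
Your proposal is correct and follows essentially the same strategy as the paper: split via the triangle inequality into the pure discretization error (handled by the assumed rate \eqref{eq:assumed_error_estimate_uhp}) and the pure quadrature error (handled by Theorem~\ref{thm:abstractStrangFalk}/Corollary~\ref{thm:AprioriErrorPertByApprox} together with Theorem~\ref{Quad Error}), then verify the exponents. The only organizational difference is that the paper carries out the exponent check by a case distinction $\lfloor s\rfloor \ge p$ versus $\lfloor s\rfloor \le p$, whereas you do a unified bookkeeping via $\beta \le \min\{s-1,p\}$; both rely on this natural bound on $\beta$, but you make it explicit while the paper leaves it implicit.
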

        \begin{proof}
            We note that classical a priori estimates, see e.g.~\citep{Ciarlet2002, OdenKikuchiPorousMedia,RodriguesObstacle}, give us the bound
            \begin{align}\label{eq:OrderOfConvergence}
                \| u - u_{hp} \|_V^2\lesssim\| u - v_{hp} \|_V^2 + \langle Au- \ell  , v_{hp}-u +v -u_{hp}  \rangle \lesssim h^{\beta}.
            \end{align}
            Next, we differentiate between the cases $\lfloor s\rfloor \geq p$ and $\lfloor s\rfloor\leq p$.
            In the case that $\lfloor s\rfloor \geq p$ and $r \geq 2p-1$ the quadrature error $ \|(A- \widetilde{A}) u_{hp}-(\ell-\tilde{\ell})\|_{V_{hp}^*}$ can also be bounded by $h^\beta$ from above by Theorem~\ref{Quad Error}. Thus, Corollary~\ref{thm:AprioriErrorPertByApprox} and \eqref{eq:OrderOfConvergence} guarantee that \eqref{eq: quadrature perturbed error} holds.
   
            If $\lfloor s\rfloor\leq p$ the convergence rate of the exact finite element-approximation is bounded by $\min\{p,s-1\}=s-1$ anyway. Hence, the order of the quadrature formula $r \geq 2p-1$ is sufficient to maintain a convergence rate of $\beta$ for the quadrature error as well, which combined with Corollary~\ref{thm:AprioriErrorPertByApprox} and \eqref{eq:OrderOfConvergence} yields \eqref{eq: quadrature perturbed error}. 
        \end{proof}

\section{Perturbation introduced during discretization} \label{sec:perturbed_by vareq}
    To further illustrate the applicability of our abstract result, we discuss another kind of perturbation, which is caused during the process of discretization. Our discussed examples are variational inequalities constrained by a variational equality. 
    By a variational equality constrained variational inequality we understand the problem of finding a pair $(u,\lambda) \in K \times \Lambda$ such that
    \begin{subequations} \label{eq:VI_plus_VE}
        \begin{alignat}{6}
            &\langle D u ,v-u \rangle& \,+\,& \langle B^\top \lambda, v-u \rangle & &\geq \langle f,v-u \rangle &\quad&  \label{eq:VI_plus_VE_A} \\
            &\langle B u,\mu \rangle& \,-\, &\langle C \lambda,\mu \rangle & &= \langle g, \mu \rangle &\quad&  \label{eq:VI_plus_VE_B}
        \end{alignat}
    \end{subequations}
    for all $ v \in K$ and $\mu \in \Lambda$, respectively.
    
    Here, $K$ is a non-empty, closed and convex subset of a real Hilbert space $V$ and $\Lambda$ is another real Hilbert space. Furthermore, let $f \in V^*$ and $g \in \Lambda^*$. The operators $D:V \rightarrow V^*$, $B:V \rightarrow \Lambda^*$, and $C:\Lambda \rightarrow \Lambda^*$ are assumed to be continuous i.e.
    \begin{align*}
        \norm{Dv}_{V^*}\leq c_D\norm{v}_V,\quad
        \norm{Bv}_{\Lambda^*}\leq c_B\norm{v}_V,\quad
        \norm{C\mu}_{\Lambda^*}\leq c_C\norm{\mu}_{\Lambda},
    \end{align*}
    for all $v\in V$, $\mu\in\Lambda$ and some constants $c_D, c_B, c_C >0$. Moreover, $D$ and $C$ are assumed to be $V$-elliptic, $\Lambda$-elliptic, respectively, i.e.
    \begin{align*}
        \langle Dv,v\rangle \geq \alpha_D\norm{v}^2_V,\quad \langle C\mu,\mu\rangle \geq \alpha_C \norm{\mu}^2_{\Lambda}
    \end{align*}
    for all $v\in V$, $\mu\in\Lambda$ and some constants $\alpha_D, \alpha_C >0$. Furthermore, the operator $C$ is assumed to be symmetric. The operator $B^\top: \Lambda \rightarrow V^*$ denotes the adjoint operator to $B$, which is continuous with continuity constant $c_{B^\top}>0$, see e.g.~\citep{YosidaFunctionalAnalysis}.
    
    Note that \eqref{eq:VI_plus_VE} is closely related to a $C$-perturbed saddle point problem \citep{Brezzi1991,Hong2023} and cannot straight away be viewed as an example for the variational inequality \eqref{eq:Varineq} with $A$ and $\ell$ defined as
    $$
        \begin{pmatrix} D & B^\top \\ B &-C \end{pmatrix} \quad \text{and} \quad \begin{pmatrix} f\\g \end{pmatrix}
    $$
    as this operator $A$ would be indefinite and not elliptic.
    
    Instead, we define $A:=D + B^\top C^{-1} B$ and $\ell:=f+B^\top C^{-1} g$. From \eqref{eq:VI_plus_VE_B} we find that $\lambda = C^{-1}(Bu-g)$ and, thus, we may obtain the variational inequality problem of finding a $u \in K$ such that
    \begin{align} \label{eq:reducedVI}
        \langle Au,v-u \rangle \geq \langle \ell, v-u \rangle 
    \end{align}
    for all $v \in K$.
    
    Note that the problems \eqref{eq:VI_plus_VE} and \eqref{eq:reducedVI} are equivalent: If a pair $(u,\lambda)\in K\times\Lambda$ solves \eqref{eq:VI_plus_VE}, then $u\in K$ solves \eqref{eq:reducedVI}. Conversely, if $u\in K$ solves \eqref{eq:reducedVI}, then the pair $(u,\lambda)\in K\times\Lambda$ with $\lambda := C^{-1}(Bu-g)$ solves \eqref{eq:VI_plus_VE}.
    
    We note that the operator $C^{-1}$ is continuous and $\Lambda^*$-elliptic, i.e.
    \begin{align*}
        \norm{C^{-1}\varphi}_{\Lambda}\leq \frac{1}{\alpha_C}\norm{\varphi}_{\Lambda^*}, \quad \langle C^{-1}\varphi,\varphi\rangle \geq \frac{1}{c_C}\norm{\varphi}^2_{\Lambda^*}
    \end{align*}
    for all $\varphi\in\Lambda^*$, see \cite[Chapter~3]{steinbach2008numerical}.
    \begin{theorem}\label{prop:A_cont_ellip}
        The mapping $A:V\to V^*$ is continuous and $V$-elliptic i.e.
        \begin{align*}
            \norm{Av}_{V^*}\leq \Big(c_D+\frac{c_B^2}{\alpha_C}\Big) \norm{v}_V\quad\text{ and }\quad \langle Av,v\rangle \geq \alpha_D\norm{v}_V^2
        \end{align*}
        for all $v\in V$.
    \end{theorem}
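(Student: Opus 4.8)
The statement is an immediate consequence of the decomposition $A = D + B^\top C^{-1} B$ together with the already-recorded mapping properties of $D$, $B$, $B^\top$ and $C^{-1}$. First I would split $\langle Av,w\rangle = \langle Dv,w\rangle + \langle B^\top C^{-1} B v, w\rangle$ for $v,w\in V$ and treat the two summands separately, using at the key moment the defining relation of the adjoint, namely $\langle B^\top \psi, w\rangle = \langle Bw, \psi\rangle$ for all $\psi\in\Lambda$, $w\in V$. Note that $C^{-1}:\Lambda^*\to\Lambda$ is well defined by the Lax--Milgram theorem from the continuity and $\Lambda$-ellipticity of $C$, so all expressions make sense.

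For the continuity bound I would estimate, via the triangle inequality in $V^*$,
\begin{align*}
    \norm{Av}_{V^*} \leq \norm{Dv}_{V^*} + \norm{B^\top C^{-1} B v}_{V^*}
    \leq c_D\norm{v}_V + c_{B^\top}\norm{C^{-1}Bv}_\Lambda
    \leq c_D\norm{v}_V + \frac{c_{B^\top}}{\alpha_C}\norm{Bv}_{\Lambda^*}
    \leq \Big(c_D + \frac{c_{B^\top}\,c_B}{\alpha_C}\Big)\norm{v}_V,
\end{align*}
using successively the continuity of $D$, of $B^\top$, the $\Lambda^*$-ellipticity/continuity estimate $\norm{C^{-1}\varphi}_\Lambda\leq \tfrac1{\alpha_C}\norm{\varphi}_{\Lambda^*}$ recorded above, and the continuity of $B$. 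Here one invokes the standard fact that a bounded linear operator and its adjoint have equal operator norms, i.e.\ $c_{B^\top}=c_B$ (a consequence of the Hahn--Banach theorem), which turns the bound into the stated constant $c_D + c_B^2/\alpha_C$.

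For ellipticity I would compute, for $v\in V$,
\begin{align*}
    \langle Av,v\rangle = \langle Dv,v\rangle + \langle B^\top C^{-1} B v, v\rangle = \langle Dv,v\rangle + \langle C^{-1}(Bv),\, Bv\rangle \geq \alpha_D\norm{v}_V^2 + \frac{1}{c_C}\norm{Bv}_{\Lambda^*}^2 \geq \alpha_D\norm{v}_V^2,
\end{align*}
where the middle equality is the adjoint relation with $\psi = C^{-1}Bv$, the first inequality combines the $V$-ellipticity of $D$ with the $\Lambda^*$-ellipticity $\langle C^{-1}\varphi,\varphi\rangle\geq \tfrac1{c_C}\norm{\varphi}_{\Lambda^*}^2$ (which in turn relies on the symmetry and ellipticity of $C$), and the last step simply drops the nonnegative term.

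I do not expect a genuine obstacle here; the only points requiring a word of care are the identification $c_{B^\top}=c_B$ needed to obtain exactly the constant claimed, and the observation that the term $\langle C^{-1}Bv,Bv\rangle$ is nonnegative — both of which are already available from the hypotheses and the auxiliary estimates stated just before the theorem.
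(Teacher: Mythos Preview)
Your proof is correct and follows essentially the same approach as the paper. The only cosmetic difference is that, for continuity, the paper estimates the bilinear form $|\langle Av,w\rangle|$ directly and uses the adjoint relation $\langle B^\top C^{-1}Bv,w\rangle=\langle C^{-1}Bv,Bw\rangle$ to apply the continuity of $B$ twice, thereby obtaining the constant $c_D+c_B^2/\alpha_C$ without needing to invoke $c_{B^\top}=c_B$.
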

    \begin{proof}
        Let $v,w\in V$. Then, we conclude from the continuity of $D,B$ and $C^{-1}$ that
        \begin{align*}
            |\langle Av,w\rangle| &\leq |\langle Dv,w\rangle| + | \langle B^\top C^{-1}Bv,w\rangle|\\
            &\leq \norm{Dv}_{V^*}\,\norm{w}_V+| \langle C^{-1}Bv,Bw\rangle|\\
            &\leq  c_D\norm{v}_V\,\norm{w}_V + \norm{C^{-1}Bv}_{\Lambda}\,\norm{Bw}_{\Lambda^*}\\
            &\leq  c_D\norm{v}_V\,\norm{w}_V+\frac{1}{\alpha_C}\norm{Bv}_{\Lambda^*}\,\norm{Bw}_{\Lambda^*}\\
            & \leq \Big(c_D+\frac{c_B^2}{\alpha_C}\Big) \norm{v}_V\,\norm{w}_V.
        \end{align*}
        Thus, $A$ is continuous.
        
        Applying the $V$-ellipticity of $D$ and the $\Lambda^*$-ellipticity of $C^{-1}$ we obtain
        \begin{align*}
            \langle Av,v\rangle &= \langle Dv,v\rangle+ \langle B^\top C^{-1} Bv,v\rangle \geq  \alpha_D\norm{v}_V^2+ \langle C^{-1} Bv,Bv\rangle \geq  \alpha_D\norm{v}_V^2
        \end{align*}
        for any $v\in V$. Thereby, $A$ is $V$-elliptic.
    \end{proof}
    By Theorem~\ref{prop:A_cont_ellip} the problem \eqref{eq:reducedVI} is indeed an example for \eqref{eq:Varineq}. For a Galerkin discretization, let $\widetilde{V}\subset V$ and $\widetilde{\Lambda}\subset\Lambda$ be two finite dimensional subsets. We may replace $K \times \Lambda$ by some $\widetilde{K} \times \widetilde{\Lambda} \subset \widetilde{V} \times \widetilde{\Lambda}$, where $\widetilde{K}$ is a non-empty, closed and convex subset of $\widetilde{V}$ and not necessarily of $K$. Thus, we seek a pair $(\tilde{u},\tilde{\lambda}) \in \widetilde{K} \times \widetilde{\Lambda}$ such that
    \begin{subequations} \label{eq:Discrete_VI_plus_VE}
        \begin{alignat}{6}
            &\langle D \tilde{u} ,\tilde{v}-\tilde{u} \rangle& \,+\,& \langle B^\top \tilde{\lambda}, \tilde{v}-\tilde{u} \rangle & &\geq \langle f,\tilde{v}-\tilde{u} \rangle &\quad&   \label{eq:Discrete_VI_plus_VE_A}\\
            &\langle B \tilde{u},\tilde{\mu} \rangle& \,-\, &\langle C \tilde{\lambda},\tilde{\mu} \rangle & &= \langle g, \tilde{\mu} \rangle &\quad&  \label{eq:Discrete_VI_plus_VE_B}
        \end{alignat}
    \end{subequations}
    for all $\tilde{v} \in \widetilde{K}$ and $\tilde{\mu} \in \widetilde{\Lambda}$, respectively.
    
    As before, we need to condense out $\tilde{\lambda}$. For that, let $\widetilde{R}: \Lambda^* \rightarrow C \widetilde{\Lambda}$ be a linear mapping such that
    \begin{align}\label{eq:RM_Property}
        \langle \widetilde{R} g, \tilde{\mu} \rangle = \langle g,\tilde{\mu} \rangle
    \end{align}
    for all $\tilde{\mu} \in \widetilde{\Lambda}$ and $ g \in \Lambda^*$,
    whose existence is guaranteed by the Lax-Milgram Lemma, see e.g.~\cite[Theorem~2.7.7.]{BrennerScott}.
    Therewith, we may define 
    \begin{align*}
        \widetilde{C}^{-1} := C^{-1}\widetilde{R} :\Lambda^* \rightarrow \widetilde{\Lambda}
    \end{align*}
    and thus obtain $\tilde{\lambda} = \widetilde{C}^{-1}(B\tilde{u}-g)$ as the solution to \eqref{eq:Discrete_VI_plus_VE_B}.
    
    Hence, \eqref{eq:Discrete_VI_plus_VE} reduces to: Find a $\tilde{u} \in \widetilde{K}$ such that
    \begin{align} \label{eq:Discrete_reducedVI}
        \langle \widetilde{A}\tilde{u},\tilde{v}-\tilde{u} \rangle \geq \langle \tilde{\ell}, \tilde{v}-\tilde{u} \rangle 
    \end{align}
    for all $\tilde{v} \in \widetilde{K}$ with $\widetilde{A} := D + B^\top \widetilde{C}^{-1} B$ and $\tilde{\ell} := f+B^\top \widetilde{C}^{-1} g$. The problems \eqref{eq:Discrete_VI_plus_VE} and \eqref{eq:Discrete_reducedVI} are equivalent in the same sense as their continuous counterparts.
    \begin{lemma}\label{lem:CM-1_Property}
        The operator $\widetilde{C}^{-1}$ is  continuous i.e.
        \begin{align*}
             \Vert \widetilde{C}^{-1}\varphi\Vert_{\Lambda}\leq \frac{1}{\alpha_C}\Vert \varphi \Vert_{\Lambda^*}
        \end{align*}
        for all $\varphi\in\Lambda^*$ and positive semi-definite.
    \end{lemma}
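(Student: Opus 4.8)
The plan is to peel off the definition $\widetilde{C}^{-1}=C^{-1}\widetilde{R}$ and then exploit the characterizing identity \eqref{eq:RM_Property} of $\widetilde{R}$ by testing it against a suitably chosen element of $\widetilde{\Lambda}$. Fix $\varphi\in\Lambda^*$ and put $\tilde{\lambda}:=\widetilde{C}^{-1}\varphi=C^{-1}\widetilde{R}\varphi\in\widetilde{\Lambda}$; since $\widetilde{R}\varphi\in C\widetilde{\Lambda}$ by construction and $C^{-1}$ is the genuine inverse of $C:\Lambda\to\Lambda^*$, this means $C\tilde{\lambda}=\widetilde{R}\varphi$ holds in $\Lambda^*$. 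Pairing with an arbitrary $\tilde{\mu}\in\widetilde{\Lambda}$ and using \eqref{eq:RM_Property} then gives $\langle C\tilde{\lambda},\tilde{\mu}\rangle=\langle\widetilde{R}\varphi,\tilde{\mu}\rangle=\langle\varphi,\tilde{\mu}\rangle$ for all $\tilde{\mu}\in\widetilde{\Lambda}$.

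For the continuity bound, I would insert the admissible choice $\tilde{\mu}=\tilde{\lambda}$ into this identity to obtain $\langle C\tilde{\lambda},\tilde{\lambda}\rangle=\langle\varphi,\tilde{\lambda}\rangle$. The $\Lambda$-ellipticity of $C$ bounds the left side from below by $\alpha_C\norm{\tilde{\lambda}}_{\Lambda}^2$, while the definition of the dual norm bounds the right side from above by $\norm{\varphi}_{\Lambda^*}\norm{\tilde{\lambda}}_{\Lambda}$. Combining the two and dividing by $\norm{\tilde{\lambda}}_{\Lambda}$ (the case $\tilde{\lambda}=0$ being trivial) yields $\alpha_C\norm{\widetilde{C}^{-1}\varphi}_{\Lambda}\leq\norm{\varphi}_{\Lambda^*}$, which is exactly the asserted estimate.

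For positive semi-definiteness, the very same identity with $\tilde{\mu}=\tilde{\lambda}$ reads $\langle\varphi,\widetilde{C}^{-1}\varphi\rangle=\langle\varphi,\tilde{\lambda}\rangle=\langle C\tilde{\lambda},\tilde{\lambda}\rangle\geq\alpha_C\norm{\tilde{\lambda}}_{\Lambda}^2\geq 0$, so $\langle\varphi,\widetilde{C}^{-1}\varphi\rangle\geq 0$ for every $\varphi\in\Lambda^*$, as claimed.

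I do not expect a genuine difficulty here. The only points deserving attention are that the test function $\tilde{\lambda}$ used above must itself lie in $\widetilde{\Lambda}$ (which it does, since $\widetilde{C}^{-1}$ maps $\Lambda^*$ into $\widetilde{\Lambda}$), and that the identity $C\tilde{\lambda}=\widetilde{R}\varphi$ should be read in the $\Lambda^*$ sense guaranteed by $\widetilde{R}\varphi\in C\widetilde{\Lambda}$. It is worth noting in passing that the same argument shows $\widetilde{C}^{-1}\varphi=0$ whenever $\varphi$ annihilates $\widetilde{\Lambda}$, which explains why only semi-definiteness, and not definiteness, can hold in general.
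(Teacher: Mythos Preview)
Your proof is correct and follows essentially the same route as the paper: both arguments rest on the identity $\langle C\tilde{\lambda},\tilde{\mu}\rangle=\langle\varphi,\tilde{\mu}\rangle$ for $\tilde{\mu}\in\widetilde{\Lambda}$ (obtained from $C\tilde{\lambda}=\widetilde{R}\varphi$ together with \eqref{eq:RM_Property}), test it with $\tilde{\mu}=\tilde{\lambda}=\widetilde{C}^{-1}\varphi$, and then invoke the $\Lambda$-ellipticity of $C$ to get both the continuity bound and the positive semi-definiteness. The paper merely presents the chain of inequalities more compactly without first isolating the Galerkin-type identity, but the content is the same.
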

    \begin{proof}
        Let $\varphi\in\Lambda^*$. Then due to the $\Lambda$-ellipticity of $C$ we have
        \begin{align*}
            \Vert \widetilde{C}^{-1}\varphi\Vert_{\Lambda}^2&\leq \frac{1}{\alpha_C} \langle C(\widetilde{C}^{-1}\varphi),\widetilde{C}^{-1}\varphi\rangle=\frac{1}{\alpha_C}\langle \widetilde{R}\varphi,\widetilde{C}^{-1}\varphi\rangle.
        \end{align*}
        Then, by \eqref{eq:RM_Property} and $\widetilde{C}^{-1}\varphi\in\widetilde{\Lambda}$ it follows that
        \begin{align*}
            \Vert \widetilde{C}^{-1}\varphi\Vert_{\Lambda}^2&\leq \frac{1}{\alpha_C}\langle \varphi,\widetilde{C}^{-1}\varphi\rangle\leq \frac{1}{\alpha_C}\Vert \varphi \Vert_{\Lambda^*}\Vert \widetilde{C}^{-1}\varphi\Vert_{\Lambda}.
        \end{align*}
        Therefore, $\widetilde{C}^{-1}$ is continuous with continuity constant $1/\alpha_C$, which is independent of the discretization. Furthermore, for arbitrary $\varphi\in\Lambda^*$ it follows from \eqref{eq:RM_Property} that
        \begin{align*}
            \langle\varphi,\widetilde{C}^{-1}\varphi\rangle &= \langle \widetilde{R}\varphi,\widetilde{C}^{-1}\varphi\rangle = \langle C(\widetilde{C}^{-1}\varphi),\widetilde{C}^{-1}\varphi\rangle
        \end{align*}
        which then combined with the $\Lambda$-ellipticity of $C$ leads to
        \begin{align*}
            \langle\varphi,\widetilde{C}^{-1}\varphi\rangle &\geq \alpha_C\norm{\widetilde{C}^{-1}\varphi}_{\Lambda}^2\geq 0.
        \end{align*}
        Therefore, the operator $\widetilde{C}^{-1}$ is positive semi-definite.
    \end{proof}
    \begin{theorem}\label{thm: pert. A cont ell}
        The mapping  $\widetilde{A}:\widetilde{V}\to \widetilde{V}^*$ is continuous and $\widetilde{V}$-elliptic i.e.
        \begin{align*}
            \norm{\widetilde{A}\tilde{v}}_{\widetilde{V}^*}\leq \Big(c_D+\frac{c_B^2}{\alpha_C}\Big) \norm{\tilde{v}}_{\widetilde{V}}\quad\text{ and }\quad\langle \widetilde{A}\tilde{v},\tilde{v}\rangle \geq  \alpha_D\norm{\tilde{v}}_{\widetilde{V}}^2
        \end{align*}
        for all $\tilde{v}\in\widetilde{V}$.
    \end{theorem}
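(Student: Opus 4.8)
The plan is to imitate, essentially verbatim, the proof of Theorem~\ref{prop:A_cont_ellip}, replacing every occurrence of $C^{-1}$ by $\widetilde{C}^{-1}$ and invoking Lemma~\ref{lem:CM-1_Property} in place of the continuity and $\Lambda^*$-ellipticity of $C^{-1}$. The only structural difference is that Lemma~\ref{lem:CM-1_Property} gives $\widetilde{C}^{-1}$ only as \emph{positive semi-definite}, not strictly $\Lambda^*$-elliptic, but this is harmless because the coercivity of $\widetilde{A}$ will come entirely from $D$.

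For continuity, fix $\tilde v,\tilde w\in\widetilde{V}$ and split $\langle\widetilde{A}\tilde v,\tilde w\rangle=\langle D\tilde v,\tilde w\rangle+\langle B^\top\widetilde{C}^{-1}B\tilde v,\tilde w\rangle$. The first term is bounded by $c_D\,\norm{\tilde v}_{\widetilde{V}}\norm{\tilde w}_{\widetilde{V}}$ by continuity of $D$. For the second term, I would move $B^\top$ across the pairing, i.e. $\langle B^\top\widetilde{C}^{-1}B\tilde v,\tilde w\rangle=\langle\widetilde{C}^{-1}B\tilde v,B\tilde w\rangle$, bound this duality pairing by $\norm{\widetilde{C}^{-1}B\tilde v}_{\Lambda}\norm{B\tilde w}_{\Lambda^*}$, then apply the continuity estimate $\norm{\widetilde{C}^{-1}\varphi}_{\Lambda}\le\alpha_C^{-1}\norm{\varphi}_{\Lambda^*}$ from Lemma~\ref{lem:CM-1_Property} with $\varphi=B\tilde v$, and finally the continuity of $B$ twice to obtain $\alpha_C^{-1}c_B^2\norm{\tilde v}_{\widetilde{V}}\norm{\tilde w}_{\widetilde{V}}$. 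Adding the two contributions and taking the supremum over $\norm{\tilde w}_{\widetilde{V}}=1$ gives $\norm{\widetilde{A}\tilde v}_{\widetilde{V}^*}\le\big(c_D+c_B^2/\alpha_C\big)\norm{\tilde v}_{\widetilde{V}}$; note in particular that the constant is the same as for $A$ and does not depend on the discretization.

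For $\widetilde{V}$-ellipticity, I would again split $\langle\widetilde{A}\tilde v,\tilde v\rangle=\langle D\tilde v,\tilde v\rangle+\langle\widetilde{C}^{-1}B\tilde v,B\tilde v\rangle$, use the $V$-ellipticity of $D$ on the first term to get $\alpha_D\norm{\tilde v}_{\widetilde{V}}^2$ (recall $\norm{\tilde v}_{\widetilde{V}}=\norm{\tilde v}_V$ for $\tilde v\in\widetilde{V}$), and discard the second term since it is nonnegative by the positive semi-definiteness of $\widetilde{C}^{-1}$ established in Lemma~\ref{lem:CM-1_Property}, with $\varphi=B\tilde v\in\Lambda^*$. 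This yields $\langle\widetilde{A}\tilde v,\tilde v\rangle\ge\alpha_D\norm{\tilde v}_{\widetilde{V}}^2$.

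I do not expect any real obstacle here; the argument is a direct transcription of the proof of Theorem~\ref{prop:A_cont_ellip}. The only point requiring a moment's care is conceptual rather than technical: one must observe that semi-definiteness of $\widetilde{C}^{-1}$ suffices for ellipticity of $\widetilde{A}$, and that the mapping $\widetilde{C}^{-1}=C^{-1}\widetilde{R}$ indeed maps $\Lambda^*$ into $\widetilde{\Lambda}$ so that the composition $B^\top\widetilde{C}^{-1}B:\widetilde{V}\to\widetilde{V}^*$ is well-defined as an operator on the discrete space; both facts have already been recorded in the build-up to the statement.
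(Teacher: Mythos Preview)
Your proposal is correct and matches the paper's proof essentially line for line: the paper likewise splits $\langle\widetilde{A}\tilde v,\tilde w\rangle$ into the $D$-part and the $B^\top\widetilde{C}^{-1}B$-part, invokes Lemma~\ref{lem:CM-1_Property} for the continuity bound $\alpha_C^{-1}$ on $\widetilde{C}^{-1}$, and uses its positive semi-definiteness to discard that term in the ellipticity estimate. Your remark that semi-definiteness (rather than full ellipticity) of $\widetilde{C}^{-1}$ suffices is exactly the point.
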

    \begin{proof}
        For any $\tilde{v},\tilde{w}\in \widetilde{V}$ it holds due to the continuity of $D,B$ and $\widetilde{C}^{-1}$ that
        \begin{align*}
            |\langle\widetilde{A}\tilde{v},\tilde{w}\rangle| &\leq |\langle D\tilde{v},\tilde{w}\rangle| + | \langle B^\top \widetilde{C}^{-1}B\tilde{v},\tilde{w}\rangle|\leq \norm{D\tilde{v}}_{\widetilde{V}^*}\,\norm{\tilde{w}}_{\widetilde{V}}+| \langle \widetilde{C}^{-1}B\tilde{v},B\tilde{w}\rangle|\\
            &\leq  c_D\norm{\tilde{v}}_{\widetilde{V}}\,\norm{\tilde{w}}_{\widetilde{V}} + \norm{\widetilde{C}^{-1}B\tilde{v}}_{\widetilde{\Lambda}}\,\norm{B\tilde{w}}_{\widetilde{\Lambda}^*}\\
            &\leq  c_D\norm{\tilde{v}}_{\widetilde{V}}\,\norm{\tilde{w}}_{\widetilde{V}}+\frac{1}{\alpha_C}\norm{B\tilde{v}}_{\widetilde{\Lambda}^*}\,\norm{B\tilde{w}}_{\widetilde{\Lambda}^*}\\
            & \leq \Big(c_D+\frac{c_B^2}{\alpha_C}\Big) \norm{\tilde{v}}_{\widetilde{V}}\,\norm{\tilde{w}}_{\widetilde{V}},
        \end{align*}
        which implies the claimed continuity of $\widetilde{A}$.
        
        Then due to the positive semi-definiteness of $\widetilde{C}^{-1}$ and the $V$-ellipticity of $D$ we obtain
        \begin{align*}
            \langle \widetilde{A}\tilde{v},\tilde{v}\rangle &= \langle D\tilde{v},\tilde{v}\rangle +  \langle B^\top \widetilde{C}^{-1}B\tilde{v},\tilde{v}\rangle
            \geq  \alpha_D\norm{\tilde{v}}_{\widetilde{V}}^2 + \langle \widetilde{C}^{-1}B\tilde{v},B\tilde{v}\rangle
            \geq  \alpha_D\norm{\tilde{v}}_{\widetilde{V}}^2
        \end{align*}
        for any $\tilde{v}\in \widetilde{V}$.
    \end{proof}
    Theorem~\ref{thm: pert. A cont ell} allows us to view \eqref{eq:Discrete_reducedVI} as  an example of \eqref{eq:PerturbedVarIneq}.
    Hence, we can apply Theorem~\ref{thm:abstractStrangFalk} to obtain an a priori error estimate for the Galerkin solution $(\tilde{u},\tilde{\lambda})$.
    \begin{theorem} \label{thm:aprioriErrorPertByEQ}
        There exist constants $C_1,C_2,C_3>0$ such that
        \begin{align*}
            \| u - \tilde{u} \|_V^2  + \| \lambda - \tilde{\lambda} \|_\Lambda^2  \leq  C_1\| u - \tilde{v} \|_V^2 + C_2\|\lambda - \tilde{\mu}\|_\Lambda^2 + C_3\langle Du + B^\top \lambda -f  , \tilde{v}-u +v -\tilde{u}  \rangle 
        \end{align*}
        for all $\tilde{v} \in \widetilde{K}$, $v \in K$ and $\tilde{\mu} \in \widetilde{\Lambda}$.
    \end{theorem}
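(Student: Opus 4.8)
The plan is to reduce the estimate to Theorem~\ref{thm:abstractStrangFalk} applied to the condensed problems \eqref{eq:reducedVI} and \eqref{eq:Discrete_reducedVI}, and to complement it by a separate bound for the multiplier error $\lambda-\tilde\lambda$ extracted directly from the variational equalities \eqref{eq:VI_plus_VE_B} and \eqref{eq:Discrete_VI_plus_VE_B}. First, by Theorem~\ref{prop:A_cont_ellip} and Theorem~\ref{thm: pert. A cont ell} the pair \eqref{eq:reducedVI}, \eqref{eq:Discrete_reducedVI} fits the abstract framework with $\widetilde{V}\subset V$, continuity constant $c=c_D+c_B^2/\alpha_C$ and ellipticity constant $\tilde\alpha=\alpha_D$; moreover $Au-\ell=Du+B^\top C^{-1}(Bu-g)-f=Du+B^\top\lambda-f$ because $\lambda=C^{-1}(Bu-g)$ by \eqref{eq:VI_plus_VE_B}. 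Hence Theorem~\ref{thm:abstractStrangFalk} gives, for all $v\in K$ and $\tilde v\in\widetilde{K}$,
\begin{equation*}
\|u-\tilde u\|_V^2\le\Big(2+\frac{4c^2}{\alpha_D^2}\Big)\|u-\tilde v\|_V^2+\frac{4}{\alpha_D}\langle Du+B^\top\lambda-f,\ \tilde v-u+v-\tilde u\rangle+\frac{4}{\alpha_D^2}\|(A-\widetilde{A})\tilde v-(\ell-\tilde\ell)\|_{\widetilde{V}^*}^2,
\end{equation*}
so it remains to absorb the last term into $\|u-\tilde v\|_V^2$ and $\|\lambda-\tilde\mu\|_\Lambda^2$.

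Next, I would rewrite the perturbation term using the definitions of $A,\widetilde{A},\ell,\tilde\ell$ as $(A-\widetilde{A})\tilde v-(\ell-\tilde\ell)=B^\top(C^{-1}-\widetilde{C}^{-1})(B\tilde v-g)$, so that continuity of $B^\top$ gives $\|(A-\widetilde{A})\tilde v-(\ell-\tilde\ell)\|_{\widetilde{V}^*}\le c_{B^\top}\|(C^{-1}-\widetilde{C}^{-1})(B\tilde v-g)\|_\Lambda$. The key point is that, by \eqref{eq:RM_Property}, for any $\varphi\in\Lambda^*$ the element $\widetilde{C}^{-1}\varphi\in\widetilde{\Lambda}$ satisfies $\langle C\widetilde{C}^{-1}\varphi,\tilde\mu\rangle=\langle\varphi,\tilde\mu\rangle$ for all $\tilde\mu\in\widetilde{\Lambda}$; that is, $\widetilde{C}^{-1}\varphi$ is the Galerkin approximation of $C^{-1}\varphi$ in $\widetilde{\Lambda}$ with respect to the continuous, $\Lambda$-elliptic form $\langle C\cdot,\cdot\rangle$, so C\'ea's lemma yields $\|(C^{-1}-\widetilde{C}^{-1})\varphi\|_\Lambda\le\frac{c_C}{\alpha_C}\inf_{\tilde\mu\in\widetilde{\Lambda}}\|C^{-1}\varphi-\tilde\mu\|_\Lambda$. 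Choosing $\varphi=B\tilde v-g$ and splitting $C^{-1}(B\tilde v-g)-\tilde\mu=C^{-1}B(\tilde v-u)+(\lambda-\tilde\mu)$ (again using $\lambda=C^{-1}(Bu-g)$), the continuity bounds for $B$ and $C^{-1}$ give $\|(A-\widetilde{A})\tilde v-(\ell-\tilde\ell)\|_{\widetilde{V}^*}\lesssim\|u-\tilde v\|_V+\|\lambda-\tilde\mu\|_\Lambda$ for every $\tilde\mu\in\widetilde{\Lambda}$.

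For the multiplier error, I would test the discrete equality \eqref{eq:Discrete_VI_plus_VE_B} with $\tilde\lambda-\tilde\mu\in\widetilde{\Lambda}$: the $\Lambda$-ellipticity of $C$ gives $\alpha_C\|\tilde\lambda-\tilde\mu\|_\Lambda^2\le\langle C(\tilde\lambda-\tilde\mu),\tilde\lambda-\tilde\mu\rangle=\langle B\tilde u-g,\tilde\lambda-\tilde\mu\rangle-\langle C\tilde\mu,\tilde\lambda-\tilde\mu\rangle$, and inserting $B\tilde u-g=B(\tilde u-u)+C\lambda$ from \eqref{eq:VI_plus_VE_B} together with the continuity of $B$ and $C$ leads to $\alpha_C\|\tilde\lambda-\tilde\mu\|_\Lambda\le c_B\|u-\tilde u\|_V+c_C\|\lambda-\tilde\mu\|_\Lambda$. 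The triangle inequality $\|\lambda-\tilde\lambda\|_\Lambda\le\|\lambda-\tilde\mu\|_\Lambda+\|\tilde\mu-\tilde\lambda\|_\Lambda$ then yields $\|\lambda-\tilde\lambda\|_\Lambda^2\lesssim\|u-\tilde u\|_V^2+\|\lambda-\tilde\mu\|_\Lambda^2$ for all $\tilde\mu\in\widetilde{\Lambda}$.

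Finally, inserting the second estimate into the first gives $\|u-\tilde u\|_V^2\lesssim\|u-\tilde v\|_V^2+\|\lambda-\tilde\mu\|_\Lambda^2+\langle Du+B^\top\lambda-f,\tilde v-u+v-\tilde u\rangle$; adding the multiplier bound and absorbing its $\|u-\tilde u\|_V^2$ term via the preceding line, then collecting all generic constants into $C_1,C_2,C_3>0$, produces the claim. The only step that needs an idea beyond Cauchy--Schwarz, Young's inequality and the stated continuity/ellipticity constants is recognizing $\widetilde{C}^{-1}$ as a Galerkin projection, which is what makes the perturbation term decay at the rate of the best approximation of $\lambda$ in $\widetilde{\Lambda}$; I expect this identification, together with the careful check that nothing but $\|u-\tilde v\|_V$ and $\|\lambda-\tilde\mu\|_\Lambda$ survives in that term, to be the main (mild) obstacle.
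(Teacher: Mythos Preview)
Your proposal is correct and follows essentially the same route as the paper: apply Theorem~\ref{thm:abstractStrangFalk} to the condensed problems, identify $Au-\ell=Du+B^\top\lambda-f$, control the perturbation term by recognizing $\widetilde{C}^{-1}$ as a Galerkin projection and invoking C\'ea's lemma, and bound $\|\lambda-\tilde\lambda\|_\Lambda$ from the equality constraints. The only cosmetic differences are that the paper splits the perturbation term as $B^\top(C^{-1}-\widetilde{C}^{-1})B(\tilde v-u)+B^\top(I-\widetilde{C}^{-1}C)\lambda$ before applying C\'ea (whereas you apply C\'ea directly to $\varphi=B\tilde v-g$ and split afterwards), and the paper cites the first Strang Lemma for the multiplier bound instead of deriving it by hand as you do.
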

    \begin{proof}
        Direct computations give $Au-\ell = Du + B^\top \lambda -f$. Let $\widetilde{G} := \widetilde{C}^{-1}C$ be the Galerkin projection operator. With $\lambda = C^{-1}(Bu-g)$, see \eqref{eq:VI_plus_VE_B}, we obtain
        \begin{align*}
            \ell - \tilde{\ell}&= B^\top C^{-1} g - B^\top \widetilde{C}^{-1} g =  B^\top\left(-\lambda + C^{-1}B u\right) - B^\top \widetilde{C}^{-1} \left(-C\lambda + B u \right)  \\
            &= B^\top \left(\widetilde{G}-I\right) \lambda + B^\top \left(C^{-1}-\widetilde{C}^{-1} \right) B u.
            \end{align*}
        Hence,
        \begin{align*}
            (A- \widetilde{A}) \tilde{v}-(\ell-\tilde{\ell}) &= B^\top \left(C^{-1} -\widetilde{C}^{-1} \right)B \tilde{v}  - (\ell-\tilde{\ell}) \\
            & = B^\top \left(C^{-1} -\widetilde{C}^{-1} \right)B \left(\tilde{v} - u \right) + B^\top \left(I-\widetilde{G}\right) \lambda
        \end{align*}
        and, therewith, 
        \begin{align*}
            \|(A- \widetilde{A}) \tilde{v}-(\ell-\tilde{\ell})\|_{V^*} &= \|B^\top \left(C^{-1} -\widetilde{C}^{-1} \right)B \left(\tilde{v} - u \right) + B^\top \left(I-\widetilde{G}\right) \lambda\|_{V^*} \\
            & \leq \|B^\top C^{-1} B \left(\tilde{v} - u \right)\|_{V^*}+\|B^\top  \widetilde{C}^{-1}  B \left(\tilde{v} - u \right)\|_{V^*} + \|B^\top \left(I-\widetilde{G}\right) \lambda\|_{V^*} \\
            & \leq 2\Big(c_{B^\top}+\frac{c_B}{\alpha_C}\Big)\|u-\tilde{v} \|_V + c_{B^\top}\| \left(I-\widetilde{G}\right) \lambda\|_\Lambda
            \\
            & \leq 2\Big(c_{B^\top}+\frac{c_B}{\alpha_C}\Big)\|u-\tilde{v} \|_V + c_{B^\top}\frac{c_C}{\alpha_C}\|\lambda - \tilde{\mu}\|_\Lambda
        \end{align*}
        for all $\tilde{\mu} \in \widetilde{\Lambda}$, where the last inequality follows with C\'ea's Lemma, see e.g.~\cite[Theorem 2.8.1]{BrennerScott}.
        
        Next, we apply Theorem~\ref{thm:abstractStrangFalk} to \eqref{eq:reducedVI} and \eqref{eq:Discrete_reducedVI} to obtain
        \begin{align*}
            \| u - \tilde{u} \|_V^2 & \leq \Bigg(2+\frac{4(c_D\alpha_C+c_B^2)^2}{(\alpha_C\alpha_D)^2}\Bigg)          \| u - \tilde{v} \|_V^2 + \frac{4}{\alpha_D}\langle Au -\ell  , \tilde{v}-u +v -\tilde{u}  \rangle  +  \frac{4}{\alpha_D^2}\|(A- \widetilde{A}) \tilde{v}-(\ell-\tilde{\ell})\|_{V^*}^2  \\
            & \leq    C_0\| u - \tilde{v} \|_V^2 + \frac{c_{B^\top}c_C}{\alpha_C}\|\lambda - \tilde{\mu}\|_\Lambda^2 + \frac{4}{\alpha_D^2}\langle Du + B^\top \lambda -f  , \tilde{v}-u +v -\tilde{u}  \rangle 
        \end{align*}
        for all $\tilde{v} \in \widetilde{K}$, $v \in K$ and $\tilde{\mu} \in \widetilde{\Lambda}$ with 
        \begin{align*}
            C_0:= \Bigg(2+\frac{4(c_D\alpha_C+c_B^2)^2}{(\alpha_C\alpha_D)^2}+\frac{2(c_{B^\top}\alpha_C+c_B)}{\alpha_C}\Bigg).
        \end{align*}
        Using the Strang Lemma \cite[Theorem 8.30]{HackbuschStrang} on \eqref{eq:VI_plus_VE_B} and \eqref{eq:Discrete_VI_plus_VE_B} we immediately obtain
        \begin{align*}
            \| \lambda - \tilde{\lambda} \|_\Lambda \leq\Big(1+\frac{c_C}{\alpha_C}\Big)\|\lambda - \tilde{\mu} \|_\Lambda + \frac{1}{\alpha_C}\|B(u-\tilde{u}) \|_{\Lambda^*} \leq \Big(1+\frac{c_C}{\alpha_C}\Big)\|\lambda - \tilde{\mu} \|_\Lambda + \frac{c_B}{\alpha_C}\|u-\tilde{u} \|_V.
        \end{align*}
        Finally, combining the last two estimates completes the proof  with 
        \begin{align*}
            C_1:= \Bigg(1+\frac{2c_B}{\alpha_C}\Bigg)C_0,\quad
            C_2:= \Bigg(1+\frac{2c_B}{\alpha_C}\Bigg)\Bigg(2+\frac{2c_C+c_{B^\top}c_C}{\alpha_C}\Bigg),\quad
            C_3:=\Big(1+\frac{2c_B}{\alpha_C}\Big)\frac{4}{\alpha_D^2}.
        \end{align*}
    \end{proof}
    %
    % Subsection: Example Biot contact problem
    %
    \subsection{Example: Biot contact problem}
        The Biot contact problem discussed in \citep{BANZ2024219} belongs to the field of poroelasticity and can be used to model, for example, a human knee joint, see \citep{sahu2016}.
        
        Let $\Omega \subset \mathbb{R}^d$ with $d=2,3$ be a bounded Lipschitz domain. The boundary $\partial \Omega$ is decomposed in two ways: One decomposition is $\partial \Omega$ = $\Gamma_f \cup \Gamma_p$, where $\Gamma_f$ is the Neumann part and $\Gamma_p$ the Dirichlet part corresponding to the fluid pressure $p$. The other decomposition is $\partial \Omega$ = $\Gamma_t \cup \Gamma_d \cup \Gamma_c$, where $\Gamma_t$ is the Neumann part, $\Gamma_d$ is the Dirichlet part and $\Gamma_c$ is the contact part corresponding to the displacement $u$. Furthermore, both $\Gamma_p$ and $\Gamma_d$ have positive measure and $\overline{\Gamma_d}\cap \overline{\Gamma_c} = \emptyset$.
        The outer unit normal on $\partial \Omega$ is denoted by $n$ and $g:\Gamma_C \rightarrow \mathbb{R}$ is the gap function to the obstacle in normal direction of $\Gamma_c$. 
        A weak formulation of that problem is given by \eqref{eq:VI_plus_VE} with
        \begin{align*}
            \Lambda &:= H_{0, \Gamma_{p}}^{1}(\Omega)=\left\{ v \in H^{1}(\Omega):\left.v\right|_{\Gamma_{p}}=0\right\}, \\
            V &:= \left[H_{0, \Gamma_{d}}^{1}(\Omega)\right]^{d},\\
            K &:= \{ v \in V \ : \ v\cdot n \leq g \text{ on } \Gamma_c\},  \\
            \langle D u,v \rangle &:= \int_\Omega 2\,\tau\, \epsilon(u) :  \epsilon(v) + \iota\, \operatorname{div}(u) \operatorname{div}(v) \, dx, \\
            \langle Bu,q \rangle &:= \int_\Omega \alpha\, \operatorname{div}(u) q\, dx,\\
            \langle C p, q \rangle &:= \int_\Omega \frac{\alpha^{2}}{\iota} p\, q + (\kappa \nabla p)^\top \nabla q \, dx ,
        \end{align*}
        where $\epsilon(v) := (\nabla v + \nabla^\top v)/2$ is the linearized strain tensor, $\iota>0$ and $\tau>0$ are the first and second Lamé-parameters, $\alpha>0$ is the Biot–Willis constant and $\kappa>0$ a symmetric uniformly positive defined matrix describing the permeability.
        %
    %
    % Subsection: Example BEM for Signorini contact problem
    %
    \subsection{Example: BEM for Signorini contact problem}
        Let $\Omega\subset\mathbb{R}^d$ with $d=2,3$ be a bounded Lipschitz domain. Its boundary $\partial\Omega$ is decomposed into the disjoint Dirichlet part $\Gamma_D$, Neumann part $\Gamma_N$ and contact part $\Gamma_C$, where $\Gamma_C$ is assumed to have positive measure. The (simplified) Signorini contact problem is described by the search for a $u$ fulfilling
        \begin{subequations}
            \begin{alignat*}{2}
                -\Delta u &= 0 &\quad& \text{in } \Omega\\
                u &= 0 &\quad& \text{on } \Gamma_D\\
                \frac{\partial u}{\partial n} &= f &\quad& \text{on } \Gamma_N\\
                u \leq g,\: \frac{\partial u}{\partial n} \leq 0,\:  (u-g)\frac{\partial u}{\partial n} &= 0 &\quad& \text{on } \Gamma_C,
            \end{alignat*}
        \end{subequations}
        for some gap function $g:\Gamma_C\to\mathbb{R}$ and Neumann data $f$ (see \citep{Gwinner2018} and the references therein).
        Its boundary integral formulation is given by \eqref{eq:VI_plus_VE} with the hypersingular integral operator with $D:=\mathcal{W}$, the single-layer integral operator $C:= \mathcal{V}$ and $B:=I+\mathcal{K}$, where $I$ is the identity operator and $\mathcal{K}$ the double-layer integral operator. These operators are defined in e.g.~\citep{MAISCHAK2005} and are linear and continuous, with both $D$ and $C$ elliptic, see \citep{Costabel1988,COSTABEL1985}.
        
        Here $\Lambda:= H^{-1/2}(\partial \Omega)$ and 
        $$
            K := \left\{ v \in H^{1/2}(\partial \Omega):  \, v|_{\Gamma_D}=0,\ v|_{\Gamma_C}\leq g \right\}
        $$
        are the appropriate sets to determine the normal derivative $\lambda=\frac{\partial u}{\partial n}$ on $\partial \Omega$ and the trace of $u$. We obtain the popular symmetric Poincar\'e-Steklov Operator
        $$
            S:=\mathcal{W} + (I+\mathcal{K})^\top \mathcal{V}^{-1} (I+\mathcal{K})= D+B^\top C^{-1} B = A.
        $$
        Discretizing the Poincar\'e-Steklov formulation necessarily results in discretizing $S$ itself as an explicit representation of $\mathcal{V}^{-1}$ is not known, and we may use Theorem~\ref{thm:aprioriErrorPertByEQ} to recover well-known a priori error estimate results (see e.g.~\citep{Gwinner2018}).

        %
    %
    % Subsection: Example OPtimal control problem
    \subsection{Example: Optimal control problem}
        A typical optimal control problem that falls within the framework of Section~\ref{sec:perturbed_by vareq} is the distributed elliptic control problem with control constraints, see e.g.~\citep{Banz2022OptimalControl, Hintermueller2008, Li2002}.  That is 
        \begin{subequations}%\label{DECP}
            \begin{alignat*}{2}
                &\text{minimize}& \quad & \frac{1}{2}\, \| y-y^d\|_{L^2(\Omega)}^2 + \frac{\alpha}{2}\,\| u-u^d\|_{L^2(\Omega)}^2\\
                &\text{over}&\quad & (y,u)\in H^1_0(\Omega)\times L^2(\Omega)\\
                &\text{subject to} &\quad & -\Delta y = f+u\\
                &&\quad & u \leq \psi \text{ a.e. in $\Omega$}
            \end{alignat*}
        \end{subequations}
        for some given data $u^d,y^d,f \in L^2(\Omega)$, $\psi \in C^0(\Omega)$ and bounded Lipschitz domain $\Omega$.
        
        Its first order optimality condition is indeed to find a triple $(u,y,p) \in K \times H^1_0(\Omega) \times H^1_0(\Omega)$ such that
        \begin{subequations}\label{eq:foc}
            \begin{alignat}{2}
                \int_\Omega u\, (z-u) \, dx -\int_\Omega \alpha^{-1}p \, (z-u) \, dx  &\geq \int_\Omega u^d  (z-u) \, dx &\quad&   \\
                - \int_\Omega u\, v \, dx + \int_\Omega (\nabla y)^\top \nabla v \, dx  &=\int_\Omega f\, v \, dx &\quad&   \\
                \int_\Omega y\, w \, dx + \int_\Omega (\nabla p)^\top \nabla w \, dx   &=\int_\Omega y^d w \, dx &\quad&  
            \end{alignat}
        \end{subequations}
        for all $z \in K$, $v \in H^1_0(\Omega)$ and $w \in H^1_0(\Omega)$, respectively, where
        $$
            K:= \{ v \in L^2(\Omega) \mid v \leq \psi \text{ a.e.~in } \Omega \}.
        $$
        The problem \eqref{eq:foc} is a slight extension of \eqref{eq:VI_plus_VE} as it contains two instead of one 
        variational equations but can be analyzed similarly, see~\citep{Banz2022OptimalControl}.

\section{Numerical experiments}\label{sec:numer_exp}
    The following numerical experiments aim to validate the theoretical convergence rates of Theorem~\ref{Quad Error} and also \eqref{eq: quadrature perturbed error} even under weaker restrictions on the element shapes. In particular, we do not require the mappings $F_D$ to be affine but allow it to be bi-polynomial in these numerical experiments. Additionally, we examine several discretization methods, namely uniform and adaptive $h$-versions with their different convergence rates as well as a uniform $p$-version to analyze the $p$-dependencey of the constants may affect the convergence rates.
    \subsection{Problem description}
    We solve the obstacle problem \eqref{eq:obstacleVI} with the data $\Omega:=\{x \in \mathbb{R}^2 : \norm{x} \leq 1.5\}$, $f := -2$, $\psi :=\log(1.5)-5/8$ and $a:=1$. Its exact solution is known to be
    \begin{align*}
        u(x) =
        \begin{cases}
            \psi, &\norm{x} \leq 1, \\
            \left(\norm{x}^2 - \log(\norm{x}^2)-1\right)/2 + \psi, & \norm{x} \geq 1. 
        \end{cases}
    \end{align*}
    The initial mesh, of which all other meshes are generated by some isotropic refinements, is depicted in Figure~\ref{fig:initmesh} next to the exact solution $u$. 
    The mapping $F_D: \hat{D}\to D$, defined on the reference square $\hat{D}:=[-1,1]^2$, is linear for those elements with an $l^\infty$ norm of all four vertices less than $2^{-1/2}$, blue-colored elements in Figure~\ref{fig:initmesh}. For elements touching the boundary $\partial \Omega$, marked yellow in Figure~\ref{fig:initmesh}, the mapping $F_D$ is a polynomial of degree 6 in each direction such that the domain approximation is always less than the machine precision; see~\cite{Banz2022Bingham} where such a domain approximation and such a mapping are also used, and discussed in more detail. For the remaining elements, $F_D$ is bilinear. Note that the free boundary is a circle of radius one and center zero. The singularity of $u$ is located on this free boundary and is thus always contained within the elements for which the mappings $F_D$ are bilinear but not linear.
    \subsection{Convergence rates}
    As $\nabla F_D$ is not always constant, we have to integrate rational functions for the computations of the local stiffness matrices. Using a Gaussian quadrature based on the tensor product with $q \times q$ points per element, which is of order $2q$, the integration will be inexact leading to the perturbed operator $\widetilde{A}$. Likewise, the integration for the right-hand side can be inexact despite that $f=-2$ is constant.
    
    The solver for discrete problems is a primal-dual active set solver and computes some $\tilde{u}_{hp}:=u_{hp,q}$. For the h-adaptive schemes we use the optimal but usually unavailable exact $H^1(\Omega)$ error $|u-u_{hp,p+11}|_{H^1(\Omega)}$ as the error estimator to steer the refinement and we use Dörfler marking with bulk parameter $\theta = \frac{1}{2}$.
    \begin{figure}[htbp]
        \centering 
        \subfloat[Initial mesh with curved elements]{
        \hspace*{4pt}%
  	    \includegraphics[trim = 0mm 5mm 5mm 0mm, clip, keepaspectratio,scale=0.1285]{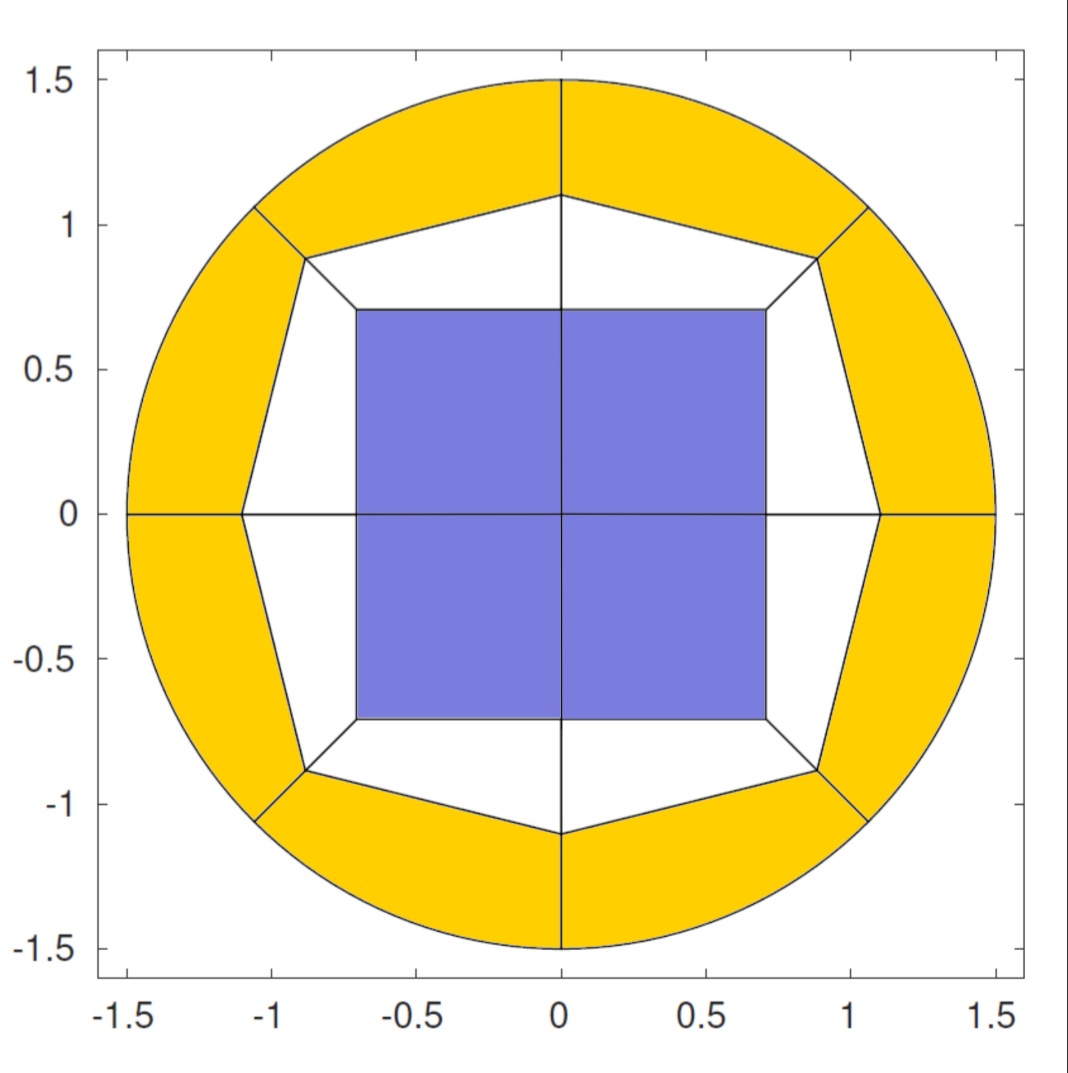}  }
        \hspace*{2.5em}
        \subfloat[Exact solution $u$]{
	    \includegraphics[trim = 4mm 5mm 9mm 5mm, clip, keepaspectratio,scale = 0.482]{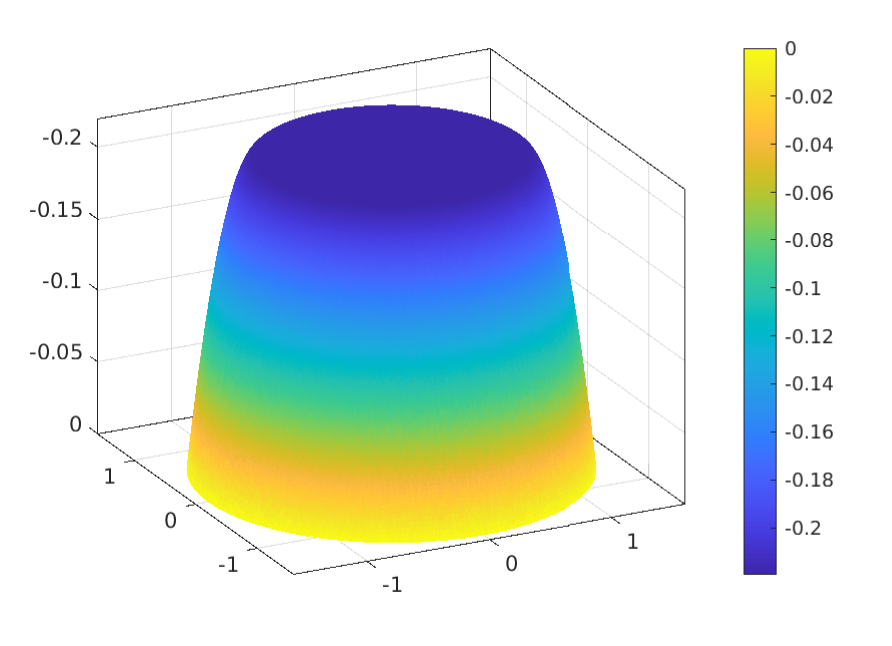}  }
        \caption{Initial mesh and exact solution}
        \label{fig:initmesh}
    \end{figure}
    If $q\leq p-1$ where $p$ is the uniform polynomial degree, then the global stiffness matrix computed in the numerical experiments exhibits multiple zero eigenvalues and is thus no longer positive definite, as the assumptions of Theorem~\ref{pertTh} are not fulfilled. For the minimal number of quadrature points $q=p$ we observe severe under-integration which significantly decreases the smallest but not the largest eigenvalues of the computed stiffness matrix. However, this effect on the eigenvalues rapidly decays with increasing $q$.
    
    \indent Figure~\ref{fig:error_VI} shows the $H^1$-error $|u-\tilde{u}_{hp}|_{H^1(\Omega)}=|u-u_{hp,p+11}|_{H^1(\Omega)}$ for $q=p+11$ for different discretizations to establish the actual convergence rates needed in Theorem~\ref{Quad Error}. These are uniform $h$-versions with $p=1,2,3$, uniform $p$-version with 80 elements, and $h$-adaptive versions with $p=1,2,3$.
    Note that $u \in H^{5/2-\epsilon}(\Omega)$ for any $\epsilon>0$ and thus the experimental convergence rates with respect to the degrees of freedom $N$ are as expected by the approximation properties of the finite element spaces, see the last row in Table~\ref{tab:eoc_error}. The $h$-adaptive schemes with $p>3$ and $hp$-adaptive schemes will also only exhibit a convergence rate of 3/2 with respect to the degrees of freedom as isotropic mesh refinement has limited capability to resolve a curved free boundary which is in this case a circle and the location of the singularity, see e.g.~\cite{BANZ2015}.
    \begin{figure}[htb!]
	   \centering
      \includegraphics[scale=0.47]{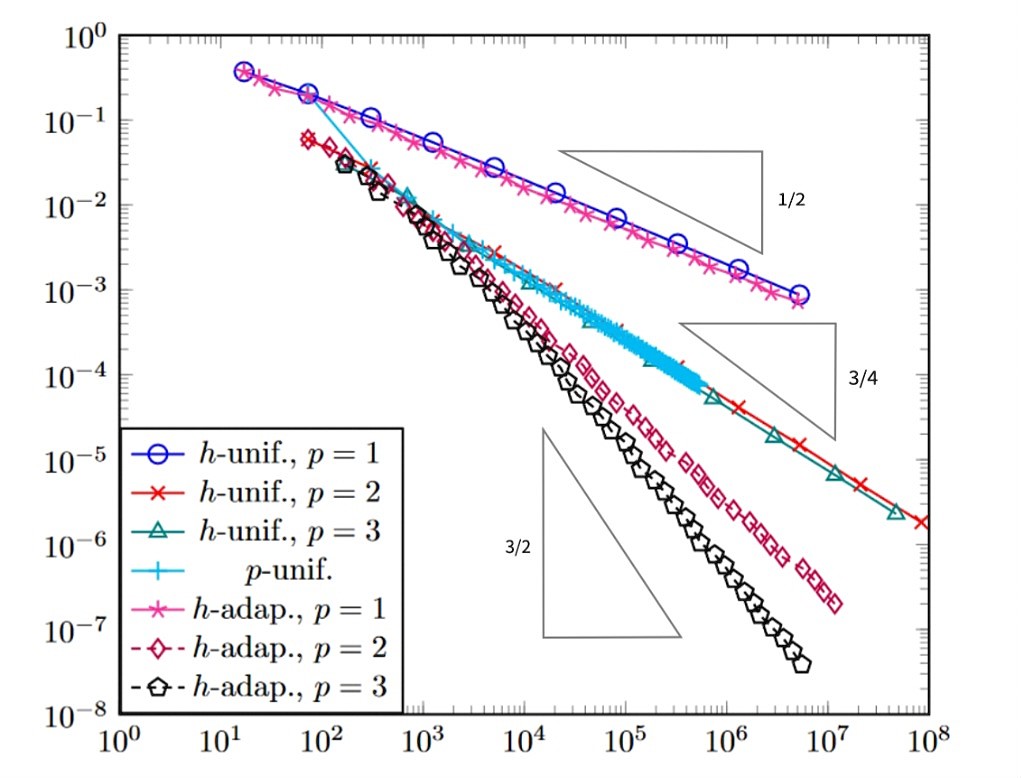}
	   \caption{Approximation error $|u-u_{hp,p+11}|_ {H^1(\Omega)}$ vs.~degrees of freedom for different discretization methods} \label{fig:error_VI}
    \end{figure}
    Next, we repeat these computations but with $q=p+j$ for $j \in \{0,1,2,3,4,5\}$ instead of $q=p+11$ and summarize exemplarily some achieved convergence rates in Table~\ref{tab:eoc_error}. Clearly, the rates are not affected by the quadrature error, neither for uniform schemes nor for adaptive schemes with their higher order of convergence, which coincides with the result \eqref{eq: quadrature perturbed error}. In particular, the $p$-dependency of the constants appearing in Section~\ref{sec: PerturbationQuadrature} may not be severe.
    %
    % Tabelle
    \begin{table}[ht]
        \centering
        %\footnotesize
        \addtolength{\tabcolsep}{-2pt}
        \begin{tabularx}{\textwidth}{l| X X X | X | X X X}
            & h1 & h2 & h3 & p & a1 & a2 & a3 \\ \hline 
            $q=p$     &  0.50 & 0.74 & 0.75 &   0.74  & 0.50 & 1.10 & 1.46\\ 
            $q=p+1$   &  0.50 &   0.75 &  0.76 &  0.74 & 0.50 & 1.08 & 1.48\\ 
            $q=p+2$   &  0.50 &   0.75 &  0.76 &  0.74 & 0.50 & 1.08 & 1.48\\ 
            $q=p+5$   &  0.50 &   0.75 &  0.76 &  0.74 & 0.50 & 1.08 & 1.48\\ 
            $q=p+11$  & 0.50 &  0.75 &  0.76 & 0.74  & 0.50 & 1.08 & 1.48
        \end{tabularx} 
        \caption{Experimental order of convergence of the error $|u-u_{hp,q}|_{H^1(\Omega)}$ with respect to the degrees of freedom for different number of quadrature points $q$. hj stands for uniform $h$-version with $p=j$, aj stands for adaptive $h$-version with $p=j$, and $p$ stands for uniform $p$-version}
        \label{tab:eoc_error}
    \end{table}
    \subsection{Quadrature error}
    The remaining numerical experiments are devoted to the quadrature error $|u_{hp}-u_{hp,q}|_{H^1(\Omega)}$ in accordance to Theorem~\ref{Quad Error}. In lack of an available $u_{hp}$ we approximate the quadrature error by $|u_{hp,p+11}-u_{hp,q}|_{H^1(\Omega)}$ for $q \in \{p,p+1,p+2,p+3,p+4,p+5\}$. That error versus the degrees of freedom is plotted in Figure~\ref{fig:quad_error} for the seven different discretization methods. The corresponding convergence rates are tabulated in Table~\ref{tab:eoc_quad_error}. Obviously, these are at least as large as the convergence rates for the total error $|u-u_{hp,q}|_{H^1(\Omega)}$, see Table~\ref{tab:eoc_error}, and increase with the number of quadrature points $q$. For large $q$ the computed quadrature error is dominated by the accumulation of rounding errors already for moderate degrees of freedom, see Figure~\ref{fig:quad_error}. The numerical experiments indicate that the results of Section~\ref{sec: PerturbationQuadrature} still hold even if $F_D$ is not affine, and could also hold for the uniform $p$-version.
    %
    % Tabelle
    \begin{table}[ht!]
        \centering
        %\footnotesize
        \addtolength{\tabcolsep}{-2pt}
        \begin{tabularx}{\textwidth}{l|X X X|X|X X X}
            & h1 & h2 & h3 & p & a1 & a2 & a3 \\ \hline 
            $q=p$     &  0.70 &   0.72 &  0.71 & 1.04 & 0.50 & 1.10 & 1.44\\ 
            $q=p+1$   &  1.22 &   1.72 &  1.75 & 1.01 & 1.28 & 1.47 & 2.20\\ 
            $q=p+2$   &  1.73 &   2.33 &  2.56 & 0.99 & 1.78 & 2.09 & 2.61\\ 
            $q=p+3$   &  2.17 &   2.80 &  2.93 & 0.99 & 2.17 & 2.71 & 2.63\\ 
            $q=p+4$   &  2.46 &   3.19 &  3.25 & 1.08 & 2.44 & 2.82 & 2.80\\ 
            $q=p+5$   &  2.77 &   3.48 &  3.33 & --- & 2.78 & 2.97 & 2.44 
        \end{tabularx} 
        \caption{Experimental order of convergence of the quadrature error $|u_{hp,p+11}-u_{hp,q}|_{H^1(\Omega)} $ with respect to the degrees of freedom for different number of quadrature points $q$. hj stands for uniform $h$-version with $p=j$, aj stands for adaptive $h$-version with $p=j$, and $p$ stands for uniform $p$-version}
        \label{tab:eoc_quad_error}
    \end{table}
    \begin{figure}[htb!]
	    \hspace{-0.5cm}
	      \subfloat[$h$-unif.,~$p=1$]{
        \resizebox{0.25\textwidth}{!}{%
	    \begin{tikzpicture}[scale=0.7]
		      \begin{loglogaxis}[width=0.75\textwidth,mark size=3pt,line width=0.75pt,xmin=5e0,xmax=3e7,ymin=1e-14,ymax=1,legend style={at={(0,0)},anchor=south west}]

			    \addplot+[mark=o, color=blue]         table[x index=0,y index=9] {h1.txt};
 			    \addplot+[mark=x, color=red]     table[x index=0,y index=10] {h1.txt};
 			    \addplot+[mark=triangle, color=teal]     table[x index=0,y index=11] {h1.txt};
 			    \addplot+[mark=+, color=cyan]     table[x index=0,y index=12] {h1.txt};
 			    \addplot+[mark=square, color=magenta]     table[x index=0,y index=13] {h1.txt};
 			    \addplot+[mark=star, color=purple]     table[x index=0,y index=14] {h1.txt};
			
			    %\legend{{$q=p$}, {$q=p+1$}, {$q=p+2$}, {$q=p+3$}, {$q=p+4$}, {$q=p+5$}}
		      \end{loglogaxis}
	    \end{tikzpicture}}
        }%
        % 	\hspace{0.5cm}
	    \subfloat[$h$-unif.,~$p=2$]{ 
        \resizebox{0.25\textwidth}{!}{%
	    \begin{tikzpicture}[scale=0.7]
		      \begin{loglogaxis}[width=0.75\textwidth,mark size=3pt,line width=0.75pt,xmin=3e1,xmax=5e7,ymin=1e-14,ymax=1,legend style={at={(0,0)},anchor=south west}]
			    \addplot+[mark=o, color=blue]         table[x index=0,y index=9] {h2.txt};
 			    \addplot+[mark=x, color=red]     table[x index=0,y index=10] {h2.txt};
 			    \addplot+[mark=triangle, color=teal]     table[x index=0,y index=11] {h2.txt};
 			    \addplot+[mark=+, color=cyan]     table[x index=0,y index=12] {h2.txt};
 			    \addplot+[mark=square, color=magenta]     table[x index=0,y index=13] {h2.txt};
 			    \addplot+[mark=star, color=purple]     table[x index=0,y index=14] {h2.txt};
			    %\legend{{$q=p$}, {$q=p+1$}, {$q=p+2$}, {$q=p+3$}, {$q=p+4$}, {$q=p+5$}}
		      \end{loglogaxis}
	    \end{tikzpicture}}
        }%
        % \hspace{0.5cm}
	    \subfloat[$h$-unif.,~$p=3$]{
        \resizebox{0.25\textwidth}{!}{%
	    \begin{tikzpicture}[scale=0.7]
		      \begin{loglogaxis}[width=0.75\textwidth,mark size=3pt,line width=0.75pt,xmin=5e1,xmax=5e7,ymin=1e-14,ymax=1,	legend style={at={(0,0)},anchor=south west}]
			    \addplot+[mark=o, color=blue]         table[x index=0,y index=9] {h3.txt};
 			    \addplot+[mark=x, color=red]     table[x index=0,y index=10] {h3.txt};
 			    \addplot+[mark=triangle, color=teal]     table[x index=0,y index=11] {h3.txt};
 			    \addplot+[mark=+, color=cyan]     table[x index=0,y index=12] {h3.txt};
 			    \addplot+[mark=square, color=magenta]     table[x index=0,y index=13] {h3.txt};
 			    \addplot+[mark=star, color=purple]     table[x index=0,y index=14] {h3.txt};
			    %\legend{{$q=p$}, {$q=p+1$}, {$q=p+2$}, {$q=p+3$}, {$q=p+4$}, {$q=p+5$}}
		      \end{loglogaxis}
	    \end{tikzpicture}}
        }%
        \subfloat[$p$-unif.]{
        \resizebox{0.255\textwidth}{!}{%
	    \begin{tikzpicture}[scale=0.84]
		      \begin{loglogaxis}[width=0.8\textwidth,mark size=3pt,line width=0.75pt,xmin=4e1,xmax=1e6,ymin=1e-14,ymax=1,legend style={at={(0,0)},anchor=south west}	]
			    \addplot+[mark=o, color=blue]         table[x index=0,y index=9] {p.txt};
 			    \addplot+[mark=x, color=red]     table[x index=0,y index=10] {p.txt};
 			    \addplot+[mark=triangle, color=teal]     table[x index=0,y index=11] {p.txt};
 			    \addplot+[mark=+, color=cyan]     table[x index=0,y index=12] {p.txt};
 			    \addplot+[mark=square, color=magenta]     table[x index=0,y index=13] {p.txt};
 			    \addplot+[mark=star, color=purple]     table[x index=0,y index=14] {p.txt};
			      %\legend{{$q=p$}, {$q=p+1$}, {$q=p+2$}, {$q=p+3$}, {$q=p+4$}, {$q=p+5$}}
		      \end{loglogaxis}
	    \end{tikzpicture}}
        }%
        \\
        \centering
	    \subfloat[$h$-adap.,~$p=1$]{ 
	    \begin{tikzpicture}[scale=0.48]
		      \begin{loglogaxis}[width=0.61\textwidth,mark size=3pt,line width=0.75pt,xmin=1e1,xmax=2e7,ymin=1e-14,ymax=1,legend style={at={(0,0)},anchor=south west}	]

			    \addplot+[mark=o, color=blue]         table[x index=0,y index=9] {a1.txt};
 			    \addplot+[mark=x, color=red]     table[x index=0,y index=10] {a1.txt};
 			    \addplot+[mark=triangle, color=teal]     table[x index=0,y index=11] {a1.txt};
 			    \addplot+[mark=+, color=cyan]     table[x index=0,y index=12] {a1.txt};
 			    \addplot+[mark=square, color=magenta]     table[x index=0,y index=13] {a1.txt};
 			    \addplot+[mark=star, color=purple]     table[x index=0,y index=14] {a1.txt};
			
			    %\legend{{$q=p$}, {$q=p+1$}, {$q=p+2$}, {$q=p+3$}, {$q=p+4$}, {$q=p+5$}}
			
		      \end{loglogaxis}
	    \end{tikzpicture}}
        % 	\hspace{0.5cm}
	   \subfloat[$h$-adap.,~$p=2$]{
	    \begin{tikzpicture}[scale=0.48]
		      \begin{loglogaxis}[width=0.61\textwidth,mark size=3pt,line width=0.75pt,xmin=3e1,xmax=5e7,ymin=1e-14,ymax=6e2,legend style={at={(1,1)},anchor=north east}]

			    \addplot+[mark=o, color=blue]         table[x index=0,y index=9] {a2.txt};
 			    \addplot+[mark=x, color=red]     table[x index=0,y index=10] {a2.txt};
 			    \addplot+[mark=triangle, color=teal]     table[x index=0,y index=11] {a2.txt};
 			    \addplot+[mark=+, color=cyan]     table[x index=0,y index=12] {a2.txt};
 			    \addplot+[mark=square, color=magenta]     table[x index=0,y index=13] {a2.txt};
 			    \addplot+[mark=star, color=purple]     table[x index=0,y index=14] {a2.txt};
			
			    \legend{{$q=p$}, {$q=p+1$}, {$q=p+2$}, {$q=p+3$}, {$q=p+4$}, {$q=p+5$}}
		      \end{loglogaxis}
	    \end{tikzpicture}}
        % 		\hspace{0.5cm}
	   \subfloat[$h$-adap.,~$p=3$]{
	    \begin{tikzpicture}[scale=0.48]
		      \begin{loglogaxis}[width=0.61\textwidth,mark size=3pt,line width=0.75pt,xmin=1e2,xmax=1e7,ymin=1e-14,ymax=1,legend style={at={(0,0)},anchor=south west}	]

			    \addplot+[mark=o, color=blue]         table[x index=0,y index=9] {a3.txt};
 			    \addplot+[mark=x, color=red]     table[x index=0,y index=10] {a3.txt};
 			    \addplot+[mark=triangle, color=teal]     table[x index=0,y index=11] {a3.txt};
 			    \addplot+[mark=+, color=cyan]     table[x index=0,y index=12] {a3.txt};
 			    \addplot+[mark=square, color=magenta]     table[x index=0,y index=13] {a3.txt};
 			    \addplot+[mark=star, color=purple]     table[x index=0,y index=14] {a3.txt};
			
			%\legend{{$q=p$}, {$q=p+1$}, {$q=p+2$}, {$q=p+3$}, {$q=p+4$}, {$q=p+5$}}

		      \end{loglogaxis}
	    \end{tikzpicture}}
	
	    \caption{Quadrature error $|u_{hp,p+11}-u_{hp,q}|_{H^1(\Omega)} $ vs.~degrees of freedom for different discretization methods. The legend is the same for all figures}
	    \label{fig:quad_error}
    \end{figure}
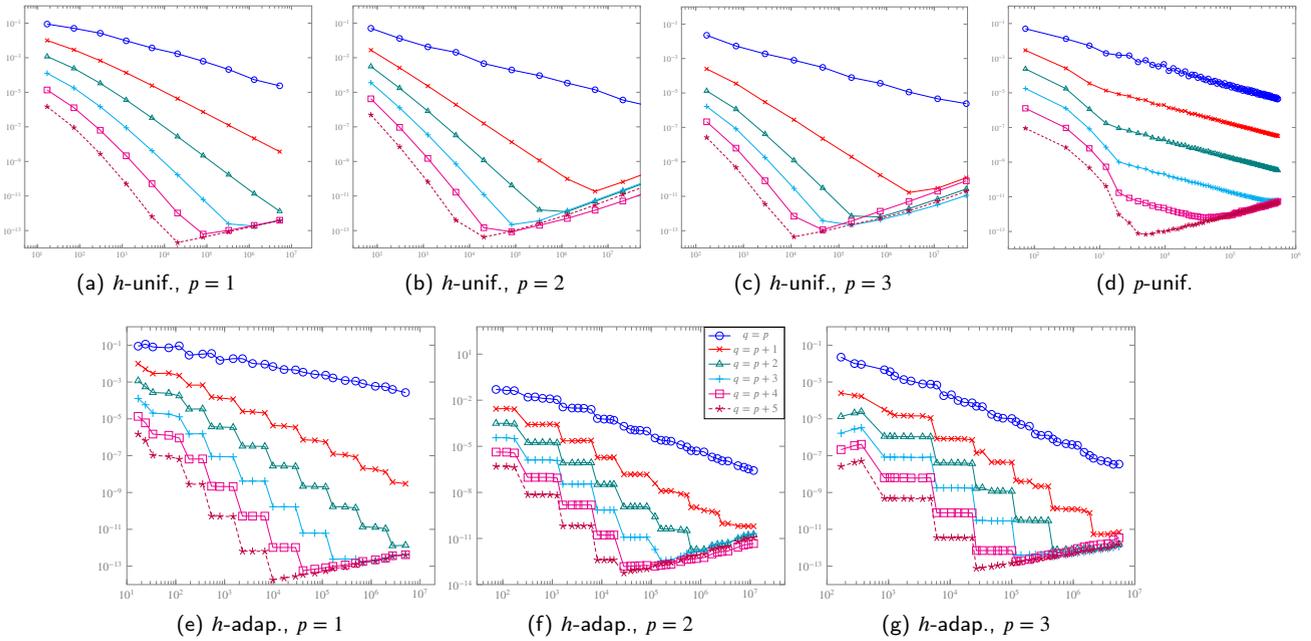

\section*{Acknowledgement}
The authors (Schönauer and Schröder) gratefully acknowledge the support by the Bundesministerium für Bildung, Wissenschaft und Forschung (BMBWF) under the Sparkling Science project SPA 01-080 'MAJA- Mathematische Algorithmen für Jedermann Analysiert'.

% To print the credit authorship contribution details
\printcredits

\section*{Declaration of interest}
    None.

%% Loading bibliography style file
\bibliographystyle{model1b-num-names}
%\bibliographystyle{cas-model2-names}

% Loading bibliography database
\bibliography{references}

\end{document}